\numberwithin{equation}{section}
\def\<{\langle}
\def\>{\rangle}
\def\DD{{\mathcal D}}
\def\HH{{\mathcal H}}
\def\KK{{\mathcal K}}
\def\LL{{\mathcal L}}
\def\XX{{\mathcal X}}
\def\bbR{\mathbb{R}}
\def\bbC{\mathbb{C}}
\def\bbD{\mathbb{D}}
\def\bbT{\mathbb{T}}
\newcommand{\Arg}{\mathop{\rm Arg}}
\newcommand{\Wth}{\widehat\theta}
\newcommand{\Hth}{\bm{H}_\theta}
\newtheorem{lemma}{Lemma}[section]
\newtheorem{proposition}[lemma]{Proposition}
\newtheorem{theorem}[lemma]{Theorem}
\theoremstyle{definition}
\newtheorem{example}[lemma]{Example}
\title{The numerical range of a contraction with finite defect numbers}
\author{Hari Bercovici}
\address{Department of Mathematics, Indiana University, Bloomington, IN 47405, USA}
\email{bercovic@indiana.edu}
\author{Dan Timotin}
\address{Simion Stoilow Institute of Mathematics of the Romanian Academy, PO Box 1-764, Bucharest 014700, Romania}
\email{Dan.Timotin@imar.ro}
\date{}
\keywords{Contraction, unitary dilation, numerical range}
\subjclass{47A12, 47A20}
\thanks{HB was supported in part by grants from the National Science Foundation. DT was supported in part by a grant of the Romanian National Authority for Scientific Research, CNCS-UEFISCDI, project number PN-II-ID-PCE-2011-3-0119}
\begin{document}

\begin{abstract}
An $n$-dilation of a contraction $T$ acting on a Hilbert space $\HH$ is a unitary dilation  acting on $\HH\oplus \bbC^n$. We show that if both defect numbers of $T$ are equal to~$n$, then  the closure of the numerical range of $T$ is the intersection of the closures of the numerical ranges of its $n$-dilations. We also obtain detailed information about the geometrical properties of the numerical range of $T$ in case $n=1$.
\end{abstract}

\maketitle

\section{Numerical range and dilations}

Assume that $\HH$ is a complex separable Hilbert space and denote by $\LL(\HH)$ the algebra of all bounded linear operators on~$\HH$. We  also use the notations $\bbD= \{z\in\bbC:|z|<1\}$ and $\bbT= \{z\in\bbC:|z|=1\}$. The spectrum of $T\in\LL(\HH)$ is denoted by $\sigma(T)$, while   the \emph{numerical range} of  $T$ is defined by
\[
W(T):= \{\langle Tx, x\rangle: x\in\HH,\,  \|x\| = 1\}.
\]

In this paper we are only concerned with contractions, that is, operators of norm at most~1. An arbitrary contraction can be decomposed as a direct sum of a unitary operator and a completely nonunitary contraction.
The following basic properties of the numerical range of a contraction can be found, for instance, in~\cite[Ch. 1]{GR}.
\begin{proposition}\label{pr:basic} Let $T\in\LL(\HH)$, $\|T\|\le 1$. Then
$W(T)$ is a  convex subset of $\bbC$ which satisfies:
\begin{enumerate}
\item
$W(T)\subset \overline{\bbD}$ and $W(T)\subset \bbD$ if $T$ is completely nonunitary.
\item
$\sigma(T)\subset\overline{W(T)}$;
\item
 $\overline{W(T)}\cap \bbT=\sigma(T)\cap \bbT$.
\end{enumerate}
\end{proposition}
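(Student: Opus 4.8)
The plan is to treat the four assertions in turn, using throughout the elementary identity $W(\alpha T + \beta I) = \alpha W(T) + \beta$ for scalars $\alpha,\beta$, which lets me normalize positions in the plane by rotation and translation. Convexity is the Toeplitz--Hausdorff theorem, which I would prove by reduction to two dimensions. Given unit vectors $x,y$ with $\langle Tx,x\rangle \ne \langle Ty,y\rangle$, after an affine normalization I may assume these two values are $0$ and $1$. Passing to the compression of $T$ onto the span of $x$ and $y$ reduces matters to a $2\times 2$ matrix, and then I would exhibit the whole segment $[0,1]$ as a set of numerical values by running a unit vector $x_\theta = \cos\theta\,x + e^{i\varphi(\theta)}\sin\theta\,y$ through a continuous family, choosing the phase $\varphi(\theta)$ so that $\langle Tx_\theta, x_\theta\rangle$ stays real, and invoking the intermediate value theorem between $\theta = 0$ and $\theta = \pi/2$.

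For (1), the inclusion $W(T)\subset\overline{\bbD}$ is immediate from Cauchy--Schwarz: $|\langle Tx,x\rangle| \le \|Tx\| \le 1$ for $\|x\| = 1$. The substantive half is the strict inclusion for completely nonunitary $T$. Here I would argue that if $|\langle Tx,x\rangle| = 1$ for some unit vector $x$, then equality in Cauchy--Schwarz forces $Tx = \mu x$ with $|\mu| = 1$. A short computation, expanding $\|T^*x - \bar\mu x\|^2$ and using $\|T^*x\|\le 1$, then shows $T^*x = \bar\mu x$ as well, so $\bbC x$ reduces $T$ and $T$ carries a one-dimensional unitary summand, contradicting complete nonunitarity. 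Hence no boundary value is attained and $W(T)\subset\bbD$.

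Assertions (2) and (3) are spectral. For (2) I would show that $\lambda\notin\overline{W(T)}$ forces $\lambda\in\rho(T)$: writing $d = \operatorname{dist}(\lambda,\overline{W(T)}) > 0$, the bound $\|(T-\lambda)x\| \ge |\langle(T-\lambda)x,x\rangle| \ge d$ for unit $x$ shows $T-\lambda$ is bounded below, hence injective with closed range; applying the same estimate to $T^*$ (whose numerical range is the complex conjugate of that of $T$, so $\bar\lambda\notin\overline{W(T^*)}$) gives $\ker(T^*-\bar\lambda) = 0$, i.e.\ $T-\lambda$ has dense range, and therefore $T-\lambda$ is invertible. For (3), the inclusion $\sigma(T)\cap\bbT\subset\overline{W(T)}\cap\bbT$ is a special case of (2). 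Conversely, if $\lambda\in\overline{W(T)}\cap\bbT$, I choose unit vectors $x_n$ with $\langle Tx_n,x_n\rangle\to\lambda$; then $\|(T-\lambda)x_n\|^2 \le 2 - 2\Re(\bar\lambda\langle Tx_n,x_n\rangle) \to 0$ exhibits $\lambda$ as an approximate eigenvalue, so $\lambda\in\sigma(T)$.

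The only genuinely substantial step is the Toeplitz--Hausdorff convexity argument; everything else reduces to Cauchy--Schwarz together with the standard facts that an operator bounded below with dense range is invertible and that approximate eigenvalues lie in the spectrum. Since the proposition is quoted from the literature, in practice I would simply cite it, but the reduction sketched above is the route I would follow to reconstruct the proof.
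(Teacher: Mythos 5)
Your proof is correct. The paper itself gives no proof of this proposition --- it simply cites Gustafson--Rao \cite[Ch.~1]{GR} --- and your reconstruction (Toeplitz--Hausdorff via the $2\times 2$ compression, equality in Cauchy--Schwarz forcing a one-dimensional reducing unitary summand to rule out boundary values for a completely nonunitary contraction, and the bounded-below-plus-dense-range argument together with the approximate-eigenvector computation for the spectral inclusions) is exactly the standard textbook route.
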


We denote $D_T=(I-T^*T)^{1/2}$ and $\DD_T=\overline{D_T \HH}$; these are called the \emph{defect operator} and the \emph{defect space} of~$T$ respectively. The  dimensions of $\DD_T$ and $\DD_{T^*}$ are called the \emph{defect indices} of~$T$.

It is well-known that $T$ admits unitary dilations; that is, there exist a space $\KK\supset\HH$ and a unitary operator $U\in\LL(\KK)$ such that $T=PU|\HH$, where $P$ denotes the orthogonal projection in $\KK$ onto $\HH$.  One can always take $\KK$ to be $\HH\oplus\HH$; however, this is not  the optimal choice when the defect spaces of $T$ are of equal finite dimension $n$. Indeed, in this case there exist unitary dilations acting on $\KK=\HH\oplus \bbC^n$, and $n$ is the smallest possible value of $\dim (\KK\ominus\HH)$. We call such dilations \emph{unitary $n$-dilations}.

It is obvious that $W(T)\subset W(U)$ for any unitary dilation $U$ of $T$.
Choi and Li~\cite{ChoiLi} showed that, in fact,
\begin{equation}\label{eq:choi-li}
\overline{W(T)} = \bigcap\{\overline{W(U)}: U \in \LL(\HH \oplus \HH)~\mbox{is a unitary dilation of}~ T\},
\end{equation}
thus answering a question raised by Halmos (see, for example, \cite{Halmos}). We note that when $\HH$ is $m$-dimensional, the construction in~\cite{ChoiLi}  produces  dilations which act on a space of dimension $2m$.

Assume now that $\dim\DD_T=\dim \DD_{T^*}=n$. We prove in Section~\ref{se:unitary dilations} the stronger result
\begin{equation}\label{eq:general intersection formula}
\overline{W(T)} = \bigcap\{\overline{W(U)}: U \in \LL(\KK)~\mbox{is a unitary $n$-dilation of}~ T\},
\end{equation}
that is, we use only the most ``economical'' unitary dilations of~$T$.
This relation was proved earlier in special cases, namely for $\dim\HH<\infty$ and $n=1$ in~\cite{GauWu}, for  $\dim\HH<\infty$ and general $n$ in~\cite{GLW}, and for particular cases with $\dim\HH=\infty$ in~\cite{CGP} and~\cite{BGT}.

The remainder of the paper deals with the special case when $\dim\DD_T=\dim \DD_{T^*}=1$. Partly as a consequence of the results in Section~\ref{se:unitary dilations}, we can investigate in detail the geometric properties of $W(T)$. The study in Section~\ref{se:general defect 1} does not use the functional model of a contraction, but this model is necessary for the  results proved in later sections. Some properties are similar to those proved for  finite dimensional contractions, but new phenomena appear for which we  provide some illustrative examples.

Since the numerical range of a direct sum is the convex hull of the union of the numerical ranges of the summands, and the numerical range of a unitary operator is rather well understood,  we always assume in the sequel that~$T$ is completely nonunitary. Also, if $T$ has different defect indices, then  $W(T)=\mathbb{D}$; so we restrict ourselves to studying completely nonunitary contractions with equal defect indices.

\section{Unitary $n$-dilations}\label{se:unitary dilations}

Let $T$ be a completely nonunitary contraction  with $\dim\DD_T=\dim \DD_{T^*}=n<\infty$. The operator
\begin{equation}\label{eq:definition of tilde T}
\widetilde T= \begin{pmatrix}
T & 0\\  D_T& 0
\end{pmatrix}.
\end{equation}
is a partial isometry on $\HH\oplus\DD_T$ and $\sigma(\widetilde{T})=\sigma(T)\cup \{0\}$. Both  $\ker \widetilde T$ and $\ker \widetilde T^*$  have  dimension $n$, and so any unitary operator $\Omega:\ker \widetilde T\to\ker\widetilde T^*$ determines a unitary $n$-dilation $U_\Omega$ of $T$ by the formula
\begin{equation}\label{eq:definition of U_Omega}
U_\Omega(x)= \begin{cases}
\widetilde{T}x& \text{ if }x\in \ker \widetilde T^\perp\\
\Omega x & \text{ if }x\in \ker \widetilde T.
\end{cases}
\end{equation}
Conversely, any unitary $n$-dilation of $T$ is unitarily equivalent to some $U_\Omega$. The set $\sigma(U_\Omega)\setminus\sigma(T)$ consists of isolated Fredholm eigenvalues of $U_\Omega$.

\begin{lemma}\label{le:extending partial isometry}
Assume that $\mathcal{X}$ is a Hilbert space and $A\in\LL(\XX)$ is a partial isometry such that $\dim\ker A=\dim\ker A^*=d<\infty$. If $\lambda\in\bbT$ is such that $A-\lambda$ is Fredholm and $\dim\ker(A-\lambda)<d$, then there exists a partial isometry $A_1$ with $\dim\ker A_1=\dim\ker A_1^*=d-1$, $A_1|\ker A^\perp=A$, such that $\dim\ker(A_1-\lambda)=\dim\ker(A-\lambda)+1$.
\end{lemma}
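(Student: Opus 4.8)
The plan is to show that every admissible $A_1$ is forced to be a rank-one modification of $A$, and then to exploit the assumption $\lambda\in\bbT$ through a norm identity that makes the $\lambda$-eigenspace grow.

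First I would determine the shape of $A_1$. Since $A_1$ extends $A$ on $\ker A^\perp$ and $\dim\ker A_1=d-1$, its initial space must be $(\ker A)^\perp\oplus\bbC e$ for some unit vector $e\in\ker A$, and isometry on the initial space forces $A_1e=:f$ to be a unit vector orthogonal to $\operatorname{ran}A$, i.e.\ $f\in\ker A^*$. Conversely, for any unit vectors $e\in\ker A$ and $f\in\ker A^*$ the operator
\[
A_1=A+f\langle\,\cdot\,,e\rangle
\]
is a partial isometry with $A_1|\ker A^\perp=A$ and $\dim\ker A_1=\dim\ker A_1^*=d-1$; this is a routine verification using $f\perp\operatorname{ran}A$. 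Thus the problem reduces to choosing the pair $(e,f)$ so that $\dim\ker(A_1-\lambda)=\dim\ker(A-\lambda)+1$.

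Next I would record two consequences of $\lambda\in\bbT$. Because $|\lambda|=1$, equality holds in $\|Ax\|\le\|x\|$ for $x\in\ker(A-\lambda)$, so $\ker(A-\lambda)\subseteq\ker A^\perp$; applying this to $A^*$ gives $\ker(A^*-\bar\lambda)\subseteq(\ker A^*)^\perp$. In particular $e\perp\ker(A-\lambda)$ for every $e\in\ker A$, and, since $A-\lambda$ has closed range (it is Fredholm), $\ker A^*\subseteq\bigl(\ker(A^*-\bar\lambda)\bigr)^\perp=\operatorname{ran}(A-\lambda)$. Hence every unit $f\in\ker A^*$ lies in $\operatorname{ran}(A-\lambda)$, and I may let $w$ be the unique solution of $(A-\lambda)w=f$ with $w\perp\ker(A-\lambda)$.

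Solving $(A_1-\lambda)x=(A-\lambda)x+\langle x,e\rangle f=0$ then shows $\ker(A-\lambda)\subseteq\ker(A_1-\lambda)$ and that exactly one further independent eigenvector appears, namely when the solution $w$ satisfies $\langle w,e\rangle=-1$. The heart of the matter is that the component $v:=P_{\ker A}w$ has norm one for every unit $f\in\ker A^*$: from $Aw=\lambda w+f$, split along $(\ker A^*)^\perp\oplus\ker A^*$, the $\ker A^*$-part gives $\|P_{\ker A^*}w\|=\|f\|=1$, while matching the orthogonal parts and using $\|Au\|=\|u\|$ (with $u:=w-v\in\ker A^\perp$) gives $\|P_{(\ker A^*)^\perp}w\|=\|u\|$; comparing with $\|w\|^2=\|u\|^2+\|v\|^2$ forces $\|v\|=1$. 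Setting $e:=-v$ then yields $\langle w,e\rangle=-\|v\|^2=-1$, so that $\ker(A_1-\lambda)=\ker(A-\lambda)\oplus\bbC w$ and the dimension increases by exactly one. The hypothesis $\dim\ker(A-\lambda)<d$ enters only to guarantee $d\ge1$, so that a unit $f\in\ker A^*$ exists. The main obstacle is precisely the identity $\|v\|=1$: a priori one cannot pin $\langle w,e\rangle$ to the exact value $-1$ with unit vectors, and it is the condition $\lambda\in\bbT$ that makes the projection of $w$ onto $\ker A$ have unit length.
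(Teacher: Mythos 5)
Your proof is correct, and at its core it runs on the same mechanism as the paper's: a rank-one modification of $A$ supported on $\ker A$ and $\ker A^*$, with the crucial normalization supplied by the norm identity that the isometric property of $A$ on $\ker A^\perp$ forces. But the packaging is genuinely different and arguably cleaner. The paper splits into two cases according to whether $\ker A\cap\ker A^*$ is trivial: in the first case it sends a common unit vector $h$ to $\lambda h$; in the second it produces, by a dimension count that uses $\dim\ker(A-\lambda)<d$, a nonzero $z=y+y_*\in\ker A\vee\ker A^*$ lying in $(A-\lambda)(\ker A^\perp)$, and deduces $\|y\|=\|y_*\|$ exactly as you deduce $\|v\|=1$ (your $u$, $\lambda v$, $f$ correspond to the paper's $x$, $y$, $y_*$, and your eigenvector $w=u+v$ to the paper's $x+\bar\lambda y$). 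You avoid the case split entirely by starting from an arbitrary unit $f\in\ker A^*$ and solving $(A-\lambda)w=f$, and you replace the dimension count by the observation that $\ker(A^*-\bar\lambda)\subset(\ker A^*)^\perp$ together with closedness of $\operatorname{ran}(A-\lambda)$ already guarantees solvability. Two by-products of your route are worth noting: the explicit criterion $\langle w,e\rangle=-1$ for the eigenspace to grow, and your (correct) observation that the hypothesis $\dim\ker(A-\lambda)<d$ is consumed only in guaranteeing $d\ge1$, whereas in the paper it also feeds the dimension count that produces $z$. Your preliminary classification of all admissible $A_1$ as $A+f\langle\cdot,e\rangle$ is not strictly needed for existence, but it is harmless and makes the search space transparent.
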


\begin{proof}
Observe that the Fredholm index of $A-\lambda$ is equal to zero since $\lambda$ is in the closure of $\mathbb{C}\setminus\sigma(A)$ and so codim$(A-\lambda)\XX<d$. Also, $\ker(A-\lambda)\subset \ker A^\perp$.

If $\ker A\cap \ker A^*\not=\{0\}$, we pick $h\in \ker A\cap \ker A^*$ of norm 1, and define the desired $A_1$ by $A_1|\ker A^\perp=A$,  $A_1 h= \lambda h$, and $A_1w=0$ for $w\in\ker A\cap\{h\}^\perp$.

If $\ker A\cap \ker A^*=\{0\}$, then $\dim (\ker A\vee \ker A^*)=2d$, whence
 there exist a nonzero vector $z\in \ker A\vee \ker A^*$ and $x\in \ker A^\perp$ such that $(A-\lambda)x=z$.

Write $z=y+y_*$, with $y\in\ker A$ and $y_*\in \ker A^*$. We have then
$(A-\lambda)x= y+y_*$
or, equivalently,
\begin{equation}\label{eq:condition lambda}
Ax-y_*=\lambda x+ y.
\end{equation}
On each side of this equality the two terms are mutually orthogonal. Equating the norms and noting that $\|Ax\|=\|x\|$, we obtain $\|y\|=\|y_*\|$, The required $A_1$ is then obtained by adding to the condition $A_1|\ker A^\perp=A$ the relation $A_1 y= - \lambda y_*$ and $A_1w=0$ for $w\in\ker A\cap\{y\}^\perp$. Indeed, we have then by~\eqref{eq:condition lambda}
\[
A_1(x+\bar\lambda y)=Ax- y_*=\lambda x+y=
\lambda(x+\bar\lambda y).
\]
Since $y\not=0$, $x+\bar\lambda y\not\in\ker A^\perp\supset\ker(A-\lambda)$,
and thus $\lambda$ is an eigenvalue of $A_1$ with multiplicity $\dim\ker(A-\lambda)+1$.
\end{proof}

\begin{proposition}\label{pr:choose eigenvalues}
Assume that $\lambda_1, \dots,\lambda_{k}$ are distinct points in $\bbT\setminus \sigma(T)$ and $n_1,n_2,\dots,n_k$ are positive integers satisfying $\sum_{j=1}^k n_j=n$. There exists a unitary $n$-dilation $U$ of $T$ such that $\lambda_j$  is an eigenvalue of $U$ with multiplicity $\ge n_j$, $j=1,2,\dots k$.
\end{proposition}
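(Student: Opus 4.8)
My plan is to establish the result first when all the multiplicities equal $1$, and then to obtain the general case by letting the prescribed eigenvalues coalesce. The reason for this two-step strategy is that Lemma~\ref{le:extending partial isometry} cannot be iterated to create eigenvalues of large multiplicity directly: each application lowers the defect by one and raises the multiplicity at the chosen point by one, but only when $\dim\ker(A-\lambda)<d$. Starting from $\widetilde T$, whose defect is $n$, this constraint caps the multiplicity one can accumulate at a single point at roughly $\lceil n/2\rceil$ (e.g.\ for $T=0$ on $\bbC^2$ no chain of extensions of $\widetilde T$ reaches multiplicity $2$, even though the desired dilation does exist). Thus the genuine difficulty lies entirely in the multiplicities exceeding $1$.

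For the case of $n$ distinct target points $\mu_1,\dots,\mu_n\in\bbT\setminus\sigma(T)$ (each to become a simple eigenvalue), I would start from $A=\widetilde T$ and apply Lemma~\ref{le:extending partial isometry} $n$ times, using $\mu_t$ as the target at the $t$-th step. Since $\sigma(\widetilde T)\cap\bbT=\sigma(T)\cap\bbT$, each $\mu_t$ lies outside the spectrum of $\widetilde T$, so $A-\mu_t$ is Fredholm of index $0$ at every stage. The hypothesis to be verified is $\dim\ker(A-\mu_t)=0<\dim\ker A$ at the $t$-th step; this holds because $\mu_t$ has not been used before and the eigenvalues created off $\sigma(T)$ are isolated, so the running targets can be chosen to avoid the finitely many unimodular eigenvalues present at each stage. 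Because every extension agrees with its predecessor on $\ker A^\perp$ and unimodular eigenvectors lie in $\ker A^\perp$, each $\mu_t$ remains an eigenvalue thereafter; after $n$ steps the defect is $0$, so the final partial isometry is a unitary $n$-dilation of $T$ with all the $\mu_t$ as eigenvalues.

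For general multiplicities I would parametrize the unitary $n$-dilations by the unitaries $\Omega\colon\ker\widetilde T\to\ker\widetilde T^*$ via~\eqref{eq:definition of U_Omega}. These form a compact set, and $\Omega\mapsto U_\Omega$ is norm-continuous, since $U_\Omega-U_{\Omega'}$ acts only on the $n$-dimensional space $\ker\widetilde T$. For each $m$ I would pick $n$ distinct points of $\bbT\setminus\sigma(T)$, with exactly $n_j$ of them within distance $1/m$ of $\lambda_j$, and use the previous paragraph to obtain $\Omega_m$ with $U_{\Omega_m}$ having all these points as eigenvalues. By compactness a subsequence satisfies $\Omega_m\to\Omega$, hence $U_{\Omega_m}\to U:=U_\Omega$ in norm.

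The crux is then to show that $\lambda_j$ is an eigenvalue of $U$ of multiplicity $\ge n_j$, and this is where I expect the main work. I would fix disjoint circles $\gamma_j$ of small radius about $\lambda_j$, with closed disks meeting neither $\sigma(T)$ nor the remaining $\lambda_i$, and with $\gamma_j\cap\sigma(U)=\varnothing$ (legitimate because the spectrum of $U$ off $\sigma(T)$ consists of isolated eigenvalues). Upper semicontinuity of the spectrum gives $\gamma_j\cap\sigma(U_{\Omega_m})=\varnothing$ for large $m$, while the $n_j$ points near $\lambda_j$ lie inside $\gamma_j$; hence the Riesz projection of $U_{\Omega_m}$ over $\gamma_j$ has rank $\ge n_j$. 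Norm convergence of the resolvents on $\gamma_j$ forces the corresponding Riesz projections to converge in norm, so the Riesz projection of $U$ over $\gamma_j$ has rank $\ge n_j$ as well; since the only spectrum of $U$ inside $\gamma_j$ is $\lambda_j$ and $U$ is unitary, that rank is exactly the multiplicity of $\lambda_j$. This rank-stability under coalescence is precisely the device that bypasses the obstruction to iterating Lemma~\ref{le:extending partial isometry} noted above.
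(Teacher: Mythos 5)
Your argument is correct, but the way you handle multiplicities is genuinely different from the paper's. The paper does everything in one induction: it constructs partial isometries $W_0=\widetilde T,W_1,\dots,W_n$ with $\dim\ker W_m=n-m$, each agreeing with its predecessor on $(\ker W_{m-1})^\perp$, subject to the invariant $\sum_j\min\{n_j,\dim\ker(W_m-\lambda_j)\}\ge m$; when the invariant is tight it applies Lemma~\ref{le:extending partial isometry} at a $\lambda_j$ whose multiplicity is still below $n_j$, and $U=W_n$ is the desired dilation. Your diagnosis of the obstacle is sharp: as stated, the lemma requires $\dim\ker(W_m-\lambda_j)<\dim\ker W_m=n-m$, while the induction only guarantees $\dim\ker(W_m-\lambda_j)<n_j$; already for $k=1$, $n_1=n=2$ the second step has $\dim\ker(W_1-\lambda_1)=1=\dim\ker W_1$, so the lemma cannot be invoked literally. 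The obstacle is removable rather than real, however: for $|\lambda|=1$ the eigenspace $\ker(A-\lambda)$ reduces the contraction $A$, so writing $A=\lambda I\oplus A'$ one can apply the lemma to $A'$ (where the eigenvalue is absent) and glue back; this replaces the hypothesis $\dim\ker(A-\lambda)<d$ by $d\ge1$, after which the one-pass induction works. In particular your claim that for $T=0$ on $\bbC^2$ no chain of extensions of $\widetilde T$ reaches multiplicity $2$ is true only if every link is a literal instance of the lemma; general partial-isometric extensions do reach it. Your workaround --- create $n$ distinct simple eigenvalues, then coalesce them and carry the multiplicities through the limit by rank stability of Riesz projections --- is valid and uses only the trivially checkable case $\dim\ker(A-\lambda)=0$ of the lemma, together with the compactness of the family $U_\Omega$ that the paper itself exploits in Theorem~\ref{th:intersection result}. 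The price is length; the direct induction, once the lemma is strengthened as above, is shorter.

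Two small repairs to your write-up. First, the intermediate $W_t$ are partial isometries, not unitaries, so ``finitely many unimodular eigenvalues at each stage'' is not justified; what is true, and suffices for choosing the running targets, is that $\sigma(W_t)\cap(\bbT\setminus\sigma(T))$ is discrete there, because $W_t-\mu$ is a finite-rank perturbation of the invertible $\widetilde T-\mu$ and the analytic Fredholm theorem applies on the component of the Fredholm domain containing $\{z:|z|>1\}$. Second, you should state explicitly that each $\gamma_j$ is taken small enough that the only point of $\sigma(U)$ it encloses is $\lambda_j$; this is possible precisely because $\sigma(U)\setminus\sigma(T)$ consists of isolated eigenvalues, and it is what converts ``rank of the Riesz projection at least $n_j$'' into ``multiplicity of $\lambda_j$ at least $n_j$''.
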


\begin{proof}
Consider the partial isometry $\widetilde T$ defined by~\eqref{eq:definition of tilde T}. We construct inductively partial isometries $W_0,W_1,\ldots, W_n$ such that
\begin{enumerate}
\item $W_0=\widetilde T$;
\item
$W_{m+1}|(\ker W_m)^\perp= W_{m}|(\ker W_m)^\perp$ for $m=0, 1, \ldots, n-1$;
\item
$\dim\ker W_m=n-m$;
\item
$\sum_{j=1}^k \min\{n_j, \dim\ker (W_m-\lambda_j)\}\ge m$.
\end{enumerate}

Assume that $m<n$ and $W_m$ has been constructed. If the inequality in (4) is strict, we choose $W_{m+1}$ to be an arbitrary partial isometry satisfying (2) and (3) with $m+1$ in place of $m$. If (4) is an equality, then there exists $j\in\{1,\ldots,k\}$ such that $\dim\ker (W_m-\lambda_j)<n_j$. Since $W_m-\lambda_j$ is a finite rank perturbation of the invertible operator $\tilde T-\lambda_j$, it is Fredholm. Lemma~\ref{le:extending partial isometry} applied to $A=W_m$ and $\lambda=\lambda_j$, produces a partial isometry $A_1$; we define $W_{m+1}=A_1$. This completes the induction process; the operator $U=W_n$ satisfies the conclusion of the proposition.
\end{proof}

Proposition~\ref{pr:choose eigenvalues} is stated without proof in~\cite{martin}.

\begin{lemma}\label{le:intersection_num_ranges}
Assume that $U$ is a unitary $n$-dilation of $T$,  $\lambda_1, \dots, \lambda_{n+1}$ are  eigenvalues of $U$ {\rm(}not necessarily distinct, but with repeated values being multiple eigenvalues\/{\rm)}. Then the  convex hull of $\lambda_1, \dots, \lambda_{n+1}$ contains a  point in $W(T)$.
\end{lemma}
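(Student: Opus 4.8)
The plan is to exploit a dimension count based on the fact that a unitary $n$-dilation lives on $\KK=\HH\oplus\bbC^n$, so that $\HH$ has codimension exactly $n$ in $\KK$. Given $n+1$ eigenvalues, I expect to manufacture from the corresponding eigenvectors a single unit vector that actually lies in $\HH$; for any such vector the value $\langle Tx,x\rangle$ is forced to be a convex combination of the $\lambda_j$, which is precisely what the conclusion asks for.

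First I would choose an orthonormal family $e_1,\dots,e_{n+1}$ in $\KK$ with $Ue_j=\lambda_j e_j$. This is possible because $U$ is unitary: eigenvectors attached to distinct eigenvalues are automatically orthogonal, while inside a single eigenspace (corresponding to a $\lambda_j$ listed with its prescribed multiplicity) one may select as many mutually orthogonal unit eigenvectors as that multiplicity allows. The convention that repeated values are counted as multiple eigenvalues is exactly what guarantees a genuine orthonormal $(n+1)$-tuple.

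Next I would set $M=\mathrm{span}\{e_1,\dots,e_{n+1}\}$, so that $\dim M=n+1$. Since $\codim_\KK\HH=n$, the elementary inequality $\dim(M\cap\HH)\ge\dim M-\codim_\KK\HH$ gives $\dim(M\cap\HH)\ge 1$, so $M\cap\HH$ contains a unit vector $x$. Writing $x=\sum_{j=1}^{n+1}c_je_j$ with $\sum_j|c_j|^2=1$, and using $Px=x$ together with $T=PU|\HH$, I would compute
\[
\langle Tx,x\rangle=\langle PUx,x\rangle=\langle Ux,x\rangle=\sum_{j=1}^{n+1}|c_j|^2\lambda_j .
\]
The right-hand side is a convex combination of $\lambda_1,\dots,\lambda_{n+1}$, hence lies in their convex hull; and since $x$ is a unit vector of $\HH$, the left-hand side lies in $W(T)$. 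This is the assertion.

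The argument is short, and essentially the only step that requires care is the first one: one must ensure the $n+1$ eigenvectors can be taken orthonormal even when some $\lambda_j$ coincide. Everything else reduces to the dimension inequality for $M\cap\HH$ and the identity $\langle PUx,x\rangle=\langle Ux,x\rangle$, valid because $Px=x$ for $x\in\HH$.
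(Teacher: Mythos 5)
Your argument is correct and is essentially the paper's own proof: both take the span $Y$ of $n+1$ eigenvectors, use the codimension-$n$ position of $\HH$ in $\KK$ to find a unit vector of $\HH$ in $Y$, and observe that $\langle Tx,x\rangle=\langle Ux,x\rangle$ lies in the convex hull of the eigenvalues. Your version merely makes explicit the orthonormal choice of eigenvectors and the resulting convex-combination formula, which the paper compresses into the statement that $W(U|Y)$ is the convex hull of $\lambda_1,\dots,\lambda_{n+1}$.
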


\begin{proof}
If $\xi_1,\dots, \xi_{n+1}$ are  corresponding eigenvectors of $U$, then their linear span $Y$ has dimension $n+1$, and thus its intersection with $\HH$ contains a vector $\xi$ of norm 1. Then $\<U\xi,\xi\>=\<T\xi,\xi\>$ belongs to the numerical range of the restriction of $U$ to $Y$, which is precisely the  convex hull of $\lambda_1, \dots, \lambda_{n+1}$.
\end{proof}

\begin{theorem}\label{th:intersection result}
Assume that $T$ is a contraction with $\dim\DD_T=\dim \DD_{T^*}=n<\infty$.
\begin{enumerate}
\item
If   $d$ is a support line for $\overline{W(T)}$, then there exists a unitary $n$-dilation $U$ of $T$ such that $d$ is a support line for $\overline{W(U)}$.
\item
$\overline{W(T)} = \bigcap\{\overline{W(U)}: U \in \LL(\KK)~\mbox{is a unitary $n$-dilation of}~ T\}$.
\end{enumerate}
\end{theorem}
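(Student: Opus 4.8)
The plan is to deduce (2) from (1) and to prove (1) by producing, for a given support line, a unitary $n$-dilation whose spectrum is confined to the corresponding closed arc of $\bbT$. Write the support line as $d=\{z:\Re(e^{-i\theta}z)=c\}$, with $\overline{W(T)}$ contained in the closed half-plane $H=\{z:\Re(e^{-i\theta}z)\le c\}$, where $c=\max\{\Re(e^{-i\theta}z):z\in\overline{W(T)}\}\le 1$. If $U$ is any $n$-dilation then $\overline{W(T)}\subseteq\overline{W(U)}$, so $d$ already meets $\overline{W(U)}$; hence $d$ supports $\overline{W(U)}$ as soon as $\overline{W(U)}\subseteq H$. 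Since $U$ is unitary, $\overline{W(U)}=\conv\sigma(U)$, so it suffices to find $U$ with $\sigma(U)\subseteq \bar A$, where $\bar A=H\cap\bbT=\{z\in\bbT:\Re(e^{-i\theta}z)\le c\}$. The case $c=1$ is immediate, since then $d$ is tangent to $\bbT$ at $e^{i\theta}\in\overline{W(T)}\cap\bbT$ and every $n$-dilation already satisfies $\overline{W(U)}\subseteq\overline{\bbD}\subseteq H$.

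The heart of the argument is the case $c<1$, where $d$ meets $\bbT$ in two points $\mu_1,\mu_2$ with $\Re(e^{-i\theta}\mu_i)=c$; write $B=\bbT\setminus\bar A$ for the open arc on which $\Re(e^{-i\theta}z)>c$. Suppose first that some endpoint, say $\mu_1$, lies in $\bbT\setminus\sigma(T)$ (equivalently $\mu_1\notin\overline{W(T)}$, by Proposition~\ref{pr:basic}(3)). I would apply Proposition~\ref{pr:choose eigenvalues} with $k=1$, $\lambda_1=\mu_1$, $n_1=n$ to obtain an $n$-dilation $U$ having $\mu_1$ as an eigenvalue of multiplicity at least $n$, and then show $\sigma(U)\cap B=\emptyset$. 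Indeed, since $B\cap\sigma(T)=\emptyset$, any point of $\sigma(U)\cap B$ would be an eigenvalue $\eta$ (recall $\sigma(U)\setminus\sigma(T)$ consists of isolated eigenvalues). Applying Lemma~\ref{le:intersection_num_ranges} to the $n+1$ eigenvalues given by $n$ copies of $\mu_1$ together with $\eta$, the segment $[\mu_1,\eta]$ contains a point $w\in W(T)$. But $\Re(e^{-i\theta}\cdot)$ equals $c$ at $\mu_1$, exceeds $c$ at $\eta$, and is affine, so $\mu_1$ is the unique point of $[\mu_1,\eta]$ lying in $H$; since $w\in W(T)\subseteq H$ we get $w=\mu_1$, which is impossible because $W(T)\subseteq\bbD$ (Proposition~\ref{pr:basic}(1)) while $\mu_1\in\bbT$. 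Thus $\sigma(U)\subseteq\bar A$.

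It remains to treat directions $\theta$ with $c<1$ for which \emph{both} $\mu_1,\mu_2\in\sigma(T)$. Then $\mu_1,\mu_2\in\overline{W(T)}$, so the whole chord $[\mu_1,\mu_2]$ is an edge of $\overline{W(T)}$ lying on $d$; distinct such directions produce distinct edges, and a planar convex set has at most countably many edges, so these exceptional directions form a countable set and the directions handled above are dense. Given such a $\theta$, I would choose good directions $\theta_k\to\theta$ (with $c(\theta_k)<1$) and $n$-dilations $U_k=U_{\Omega_k}$ with $\sigma(U_k)\subseteq\bar A_{\theta_k}$. The parametrizing unitaries range over a compact set and $\Omega\mapsto U_\Omega$ is norm-continuous, so a subsequence of the $U_k$ converges in norm to an $n$-dilation $U$. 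Because the spectrum of a normal operator is $1$-Lipschitz in the Hausdorff metric, and the arcs $\bar A_{\theta_k}$ converge to $\bar A_\theta$ (continuity of the support value $c(\cdot)$), passing to the limit yields $\sigma(U)\subseteq\bar A_\theta$. This establishes (1) in all cases.

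Finally, (2) follows formally: the inclusion $\overline{W(T)}\subseteq\bigcap_U\overline{W(U)}$ is clear, and if $w\notin\overline{W(T)}$ then, $\overline{W(T)}$ being closed and convex, there is a support line $d$ with $w$ in the complementary open half-plane; by (1) some $n$-dilation $U$ has $\overline{W(U)}$ inside the closed half-plane bounded by $d$ that contains $\overline{W(T)}$, so $w\notin\overline{W(U)}$. I expect the main obstacle to be precisely the exceptional ``edge'' directions: there one cannot seat a multiplicity-$n$ eigenvalue at an endpoint (both endpoints lie in $\sigma(T)$), and one must instead rely on the countability of edges together with the joint continuity of the spectrum and of the arcs $\bar A_\theta$ to obtain the dilation as a norm limit.
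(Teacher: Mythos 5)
Your proposal is correct and follows essentially the same route as the paper: plant an eigenvalue of multiplicity $n$ on the arc cut off by $d$ via Proposition~\ref{pr:choose eigenvalues}, rule out further eigenvalues beyond the line via Lemma~\ref{le:intersection_num_ranges}, and use compactness of the set of unitary $n$-dilations to pass to a limit in the degenerate configuration. The only real difference is in the approximation scheme: the paper treats all cases uniformly by translating $d$ outward by $\epsilon$ and always taking a limit, whereas you seat the eigenvalue directly at an endpoint of $d\cap\bbT$ whenever that endpoint avoids $\sigma(T)$, reserving the limiting argument (rotating the direction and using countability of edges) for the exceptional case where both endpoints lie in $\sigma(T)$.
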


\begin{proof}
Obviously (2) follows from (1), so we only have to prove (1). Assertion (1) is immediate if $d$ is tangent to $\bbT$, so we may assume that $d$ intersects $\bbT$ in two points. These two points must belong to the closure of one of the component arcs $\wideparen{z_1,z_2}$ of $\bbT\setminus\sigma(T)=\bbT\setminus \overline{W(T)}$.

Consider then, for $\epsilon>0$, the line $d_\epsilon$ parallel to $d$ at a distance $\epsilon$ and that does not meet $W(T)$. For $\epsilon$ sufficiently small, $d_\epsilon$ intersects $\wideparen{z_1,z_2}$ in two points $\lambda_\epsilon, \lambda_\epsilon'$. By Proposition~\ref{pr:choose eigenvalues} there exists a unitary $n$-dilation $U_\epsilon$ of $T$ (acting on $\HH\oplus\DD_T$) which has $\lambda_\epsilon$ as an eigenvalue of multiplicity $n$. We assert that $d$ is a support line for $W(U_\epsilon)$. Indeed, if $U$ had an eigenvalue $\lambda'_\epsilon$ on $\bbT\cap \wideparen{z_1,z_2}$, we could apply Lemma~\ref{le:intersection_num_ranges} to the set formed by $\lambda$ repeated $n$ times together with $\lambda'$ to obtain an element  in $W(T)$ on the segment $[\lambda_\epsilon, \lambda_\epsilon']$ which is at distance $\epsilon$ from $W(T)$.

Finally, noting that  the set of unitary $n$-dilations of $T$ on $\HH\oplus\DD_T$ is compact,  a limit point of the unitary dilations $U_\epsilon$ will itself have $d$ as a support line.
\end{proof}

Note that based on the  Sz.-Nagy---Foias functional model~\cite{SNF} one can give a function theoretical description of all unitary $n$-dilations of $T$; this description, originating in~\cite{BL}, is actually used in~\cite{BGT, CGP}. As seen above, it appears that direct arguments are simpler and more transparent. The functional model does however offer some advantages which we start to exploit in Section~\ref{se:functional model}.

\section{Defect spaces of dimension~1}\label{se:general defect 1}

The remainder of the paper  deals with the particular case
when $\dim\DD_T=\dim \DD_{T^*}=1$ and  $T$ is completely nonunitary; to avoid trivialities, we will assume in the sequel that $\dim\HH>1$.
The first condition is of course the simplest case of finite and equal defect numbers, and this allows us to obtain several geometrical properties of $W(T)$, extending results obtained for finite dimensional matrices in~\cite{GauWu, GauWu2, GauWu3} and for certain infinite dimensional operators in~\cite{CGP, CGP2}.

Recall that a point of a convex set $C$ in the plane is said to be \emph{exposed} if there is a support line for $C$ that intersects it only in that point; such a line is called an \emph{exposing} line. A \emph{corner} is an exposed point for which the exposing line is not unique.
If $T$ is a completely nonunitary contraction, then all points in $\overline{W(T)}\cap\bbT$ are obviously exposed. We describe next some other properties of the boundary $\partial W(T)$. 

\begin{theorem}\label{th:exposed and corners}
Let $T\in\LL(\HH)$ be a completely nonunitary contraction with $\dim\DD_T=\dim \DD_{T^*}=1<\dim \HH$.
\begin{enumerate}
\item If $w\in\bbD$ is an exposed point of $\overline{W(T)}$, then $w\in W(T)$.
\item Any eigenvalue of $T$ is in the interior of $W(T)$.
\item $\overline{W(T)}$ has no corners in $\bbD$.
\end{enumerate}
\end{theorem}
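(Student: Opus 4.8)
The plan is to route all three parts through a single rigidity fact: under the standing hypotheses $T$ has no reducing eigenvector, i.e.\ there is no unit vector $x$ and scalar $\mu$ with $Tx=\mu x$ and $T^*x=\bar\mu x$. To see this, such an $x$ would span a one-dimensional reducing subspace on which $T$ acts as the scalar $\mu$, with $|\mu|<1$ by Proposition~\ref{pr:basic}(1); then $D_Tx=\sqrt{1-|\mu|^2}\,x\neq0\neq D_{T^*}x$, so $x\in\DD_T$ and $x\in\DD_{T^*}$, whence $\DD_T=\DD_{T^*}=\bbC x$ (both being one-dimensional). A short computation on the reducing complement $\HH'=(\bbC x)^\perp$ then gives $D_T|\HH'=D_{T^*}|\HH'=0$, so $T'=T|\HH'$ is unitary; as $\dim\HH>1$ this is a nontrivial unitary summand, contradicting complete nonunitarity. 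Part (2) now follows directly: if $Tx=\mu x$ with $\|x\|=1$ but $\mu=\langle Tx,x\rangle$ lay on $\partial\overline{W(T)}$, a rotation sends a supporting line to $\{\operatorname{Re}z=c\}$ with $\overline{W(T)}\subset\{\operatorname{Re}z\le c\}$ and $\operatorname{Re}\mu=c$; then $x$ attains $\max\sigma(\operatorname{Re}T)=c$, forcing $\operatorname{Re}(T)x=cx$ for $\operatorname{Re}T=\tfrac12(T+T^*)$ and hence $T^*x=(2c-\mu)x=\bar\mu x$, a reducing eigenvector. So every eigenvalue lies in the interior of $W(T)$.

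For (1), let $w\in\bbD$ be exposed, with exposing line $d$, so $\overline{W(T)}\cap d=\{w\}$. Since $w\in\bbD$, $d$ is a genuine chord meeting $\bbT$ in two points $p_1,p_2$; these are not in $\overline{W(T)}$, so $p_1,p_2\notin\sigma(T)$ by Proposition~\ref{pr:basic}(3). By Theorem~\ref{th:intersection result}(1) there is a unitary $1$-dilation $U$ for which $d$ supports $\overline{W(U)}=\conv\sigma(U)$. The face $\overline{W(U)}\cap d$ is a compact convex subset of $d$ containing the interior point $w$; its extreme points are extreme points of $\conv\sigma(U)$, hence lie in $\sigma(U)\subset\bbT$, so they can only be $p_1,p_2$. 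Since $w\in\bbD$ is not extreme, the face is the whole chord $[p_1,p_2]$ with $p_1,p_2\in\sigma(U)\setminus\sigma(T)$. As recorded in Section~\ref{se:unitary dilations}, points of $\sigma(U)\setminus\sigma(T)$ are isolated Fredholm eigenvalues, so $p_1,p_2$ are eigenvalues of $U$. Applying Lemma~\ref{le:intersection_num_ranges} with $n=1$ to the $n+1=2$ eigenvalues $p_1,p_2$ produces a point of $W(T)$ lying on $[p_1,p_2]\subset d$; since $d$ meets $\overline{W(T)}$ only at $w$, that point is $w$, so $w\in W(T)$.

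For (3), suppose $w\in\bbD$ is a corner. Being exposed, $w=\langle Tx,x\rangle$ for some unit vector $x$ by part (1). A corner has a nondegenerate cone of outward normals, so there are two directions $e^{i\alpha},e^{i\beta}$ with $0<\beta-\alpha<\pi$ in each of which $w$ supports $\overline{W(T)}$. The maximizer step of (2), applied to the self-adjoint operators $A_\theta=\tfrac12(e^{-i\theta}T+e^{i\theta}T^*)$ for $\theta\in\{\alpha,\beta\}$, gives $A_\alpha x=c_\alpha x$ and $A_\beta x=c_\beta x$. Read as linear equations in $Tx$ and $T^*x$, their coefficient determinant is $e^{i(\beta-\alpha)}-e^{-i(\beta-\alpha)}=2i\sin(\beta-\alpha)\neq0$, so $Tx$ and $T^*x$ are scalar multiples of $x$; matching $\langle Tx,x\rangle=w$ forces $Tx=wx$ and $T^*x=\bar wx$, a reducing eigenvector. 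This contradiction shows $\overline{W(T)}$ has no corners in $\bbD$.

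The step demanding the most care is (1). A direct spectral argument does not suffice because $\max\sigma(\operatorname{Re}T)$ need not be attained, so an exposed point need not be reachable by an eigenvector of $\operatorname{Re}T$; the dilation $U$ is precisely what repairs this. Within that argument the crucial verification is that the supporting chord meets $\overline{W(U)}$ in the full segment $[p_1,p_2]$ and that $p_1,p_2$ are true eigenvalues of $U$, not merely spectrum—this is exactly where the description of $\sigma(U)\setminus\sigma(T)$ as isolated Fredholm eigenvalues is essential, allowing Lemma~\ref{le:intersection_num_ranges} to apply with exactly two eigenvalues. By contrast, the defect bookkeeping in the rigidity fact and the $2\times2$ solvability in (3) are routine.
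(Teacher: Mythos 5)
Your proof is correct, and for part (1) it follows the paper's argument essentially verbatim: the exposing line meets $\bbT$ in two points outside $\sigma(T)$, Theorem~\ref{th:intersection result}(1) produces a unitary $1$-dilation $U$ for which the same line is a support line, the two intersection points are eigenvalues of $U$, and Lemma~\ref{le:intersection_num_ranges} places a point of $W(T)$ on the chord, which can only be $w$. (You spell out, via faces of $\conv\sigma(U)$ and the remark that $\sigma(U)\setminus\sigma(T)$ consists of isolated Fredholm eigenvalues, why \emph{both} endpoints are genuine eigenvalues; the paper asserts this step tersely.) The genuine difference is in (2) and (3): the paper cites Sinclair~\cite{Sin} (an eigenvalue on the boundary of the numerical range has reducing eigenspace) and Donoghue~\cite{Don} (a corner attained in $W(T)$ is an eigenvalue), and then derives a contradiction from the defect count; you instead inline proofs of both classical facts --- the positive-operator argument giving $A_\theta x=c_\theta x$ for $A_\theta=\tfrac12(e^{-i\theta}T+e^{i\theta}T^*)$, and the $2\times 2$ system at a corner --- and funnel everything through the single observation that $T$ admits no reducing eigenvector. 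What you gain is self-containedness and, in (3), a slightly more direct conclusion (a reducing eigenvector at once, rather than an eigenvalue that is then excluded by (2)); what the paper gains is brevity. One small point worth making explicit in your (3): two distinct exposing lines through the same point $w$ cannot be parallel, so their outward normals are neither equal nor antipodal; this is exactly why you may take $0<\beta-\alpha<\pi$ and why the determinant $2i\sin(\beta-\alpha)$ is nonzero.
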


\begin{proof}
(1) Assume that $d$ is an exposing line for $w$. If $\alpha\in d\cap \bbT$ then $\alpha\not\in\sigma(T)$, since otherwise we would have $[w,\alpha]\subset\overline{W(T)}$.  Thus $d$ intersects $\bbT$ in two points $\alpha, \alpha'$, neither of which  belongs to $\sigma(T)$. Theorem~\ref{th:intersection result} yields a unitary 1-dilation $U$ of $T$ whose numerical range also has $d$ as a support line. Then $\alpha$ and $\alpha'$ are eigenvalues of $U$, and  Lemma~\ref{le:intersection_num_ranges} implies that $w\in W(T)$.

(2) It was shown in~\cite{Sin}  that if an eigenvalue $\alpha$ is in the boundary of the numerical range, then the corresponding eigenspace reduces $T$. Thus $T=\alpha\oplus T'$, and $\dim \DD_{T'}=\dim\DD_T-1=0$. Since, similarly, $\dim\DD_{T'{}^*}=0$, $T'$ is both unitary and completely nonunitary, which is possible only when $T'$ acts on the trivial space $\{0\}$. This contradicts the assumption $\dim\HH>1$.

(3) If $w$ is a corner of $\overline{W(T)}$ and $|w|<1$, then there exist a support line of $W(T)$ that passes through $w$ and intersects the unit circle in two points in $\bbT\setminus \sigma(T)$. As shown above, it follows then that $w$ actually belongs to $W(T)$. By a result in \cite{Don}, $w$ has to be an eigenvalue of $T$, contradicting~(2).
\end{proof}

Assertions (2) and (3) of Theorem~\ref{th:exposed and corners} were proved  in~\cite{CGP2} in  case the characteristic function of $T$ is a Blaschke product.

It was shown in~\cite{GauWu}  that, when $T$ acts on a finite dimensional space and $U$ is a unitary 1-dilation of $T$, the boundary of the numerical range of~$U$ is a polygon whose $n+1$ sides are tangent to $W(T)$. The theorem below is an infinite dimensional version of this result (see also~\cite{CGP} for a more restricted infinite dimensional extension). The following sections show that additional information can be obtained under certain conditions.
\begin{theorem}\label{th:polygon}
Let $T$ be a completely nonunitary contraction with $\dim\DD_T=\dim \DD_{T^*}=1<\dim\HH$, $U$  a unitary $1$-dilation of $T$, and let the arc $I=\wideparen{z_1z_2}$ be  a connected component of $\bbT\setminus \sigma(T)$. Then:
\begin{enumerate}
\item If the segment $[z_1, z_2]$ is a support line for $W(T)$, then $\sigma(U)\cap I$ contains at most one eigenvalue of $U$.

\item If the segment $[z_1, z_2]$ is not a support line for $W(T)$, then
\begin{enumerate}
\item the only possible limit points of $\sigma(U)\cap I$ are $z_1$ and $z_2$;
\item
each segment determined by consecutive values of $\sigma(U)\cap I$ on $\bbT$ is a support line for $W(T)$, and its intersection with $\overline{W(T)}$ is actually in $W(T)$.
\end{enumerate}

\end{enumerate}
\end{theorem}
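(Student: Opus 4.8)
The plan is to handle the two cases with the same two tools throughout—the normality of $U$ (so that $\overline{W(U)}=\conv\sigma(U)$) and Lemma~\ref{le:intersection_num_ranges} with $n=1$—and to isolate a single essential-spectrum computation as the only genuinely analytic ingredient.

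First I dispose of (1) and (2a). For (1), note that $z_1,z_2\in\sigma(T)\subseteq\overline{W(T)}$ lie on $\bbT$, so the support hypothesis places $\overline{W(T)}$ in the closed half-plane on the far side of $I$ (a point of $\sigma(T)\cap\bbT$ off the closed arc, if one exists, forces this; the degenerate case $\sigma(T)\cap\bbT=\{z_1,z_2\}$ is handled directly). Consequently $\overline{W(T)}$ misses the open circular segment $\Delta$ that $I$ cuts off from $\bbD$. If $\lambda_1\neq\lambda_2$ were two eigenvalues of $U$ in the open arc $I$, the chord $[\lambda_1,\lambda_2]$ would lie entirely in $\Delta$, hence be disjoint from $\overline{W(T)}$; but Lemma~\ref{le:intersection_num_ranges} forces $[\lambda_1,\lambda_2]$ to meet $W(T)$, a contradiction. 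For (2a), every point of $\sigma(U)\cap I$ lies in $\sigma(U)\setminus\sigma(T)$ and is therefore an isolated point of $\sigma(U)$, as noted after~\eqref{eq:definition of U_Omega}; a limit point of $\sigma(U)\cap I$ lying in the open arc would be a non-isolated point of $\sigma(U)$ in $\sigma(U)\setminus\sigma(T)$, which is impossible, so accumulation can occur only at $z_1,z_2$.

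Now fix consecutive eigenvalues $\mu,\mu'$ of $U$ in $I$ and let $\ell$ be the line through them. Since $U$ is normal, $\overline{W(U)}=\conv\sigma(U)$, and because the open sub-arc $\wideparen{\mu\mu'}\subseteq I$ contains no point of $\sigma(U)$ (no spectrum of $T$, and by consecutiveness no eigenvalue of $U$), the chord $[\mu,\mu']$ is an edge of $\conv\sigma(U)$ with $\ell\cap\conv\sigma(U)=[\mu,\mu']$. As $W(T)\subseteq W(U)$ gives $\overline{W(T)}\subseteq\conv\sigma(U)$, the line $\ell$ is a support line for $\overline{W(T)}$, and $\ell\cap\overline{W(T)}\subseteq[\mu,\mu']$; since $\mu,\mu'\in I$ are not in $\overline{W(T)}\cap\bbT=\sigma(T)\cap\bbT$, the set $\ell\cap\overline{W(T)}$ is in fact a compact subset of the open chord $(\mu,\mu')\subset\bbD$. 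Lemma~\ref{le:intersection_num_ranges} already shows it meets $W(T)$; the remaining point is that it lies \emph{entirely} in $W(T)$.

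For this crux, rotate so that $\ell=\{z:\Re(e^{-i\theta}z)=c\}$ with $\overline{W(T)}\subseteq\{\Re(e^{-i\theta}z)\le c\}$; then $c=\max\sigma(A)$ for the self-adjoint contraction $A:=\Re(e^{-i\theta}T)$, and $\ell\cap\overline{W(T)}=\{w\in\overline{W(T)}:\Re(e^{-i\theta}w)=c\}$. The decisive claim is $c\notin\sigma_{\mathrm{ess}}(A)$. Realizing $A$ as the compression of $B:=\Re(e^{-i\theta}U)$ to $\HH$, and using that $\HH$ has codimension $1$ in $\KK$, one sees $B$ differs from $A\oplus(\text{scalar})$ by an operator of rank $\le 2$, so $\sigma_{\mathrm{ess}}(A)=\sigma_{\mathrm{ess}}(B)$. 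Since $U$ is unitary, $\sigma_{\mathrm{ess}}(B)=\{\Re(e^{-i\theta}\zeta):\zeta\in\sigma_{\mathrm{ess}}(U)\}$ and $\sigma_{\mathrm{ess}}(U)\subseteq\sigma(T)\cap\bbT$ (as $\sigma(U)\setminus\sigma(T)$ consists of isolated finite-multiplicity eigenvalues). Every $\zeta\in\sigma(T)\cap\bbT\subseteq\overline{W(T)}$ has $\Re(e^{-i\theta}\zeta)\le c$, with equality excluded because $\ell\cap\overline{W(T)}\subset\bbD$ while $\zeta\in\bbT$; by compactness $\sigma_{\mathrm{ess}}(A)\subseteq[-1,c)$. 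Hence $c$ is an isolated eigenvalue of $A$ of finite multiplicity; let $M=\ker(A-c)$ (finite dimensional) and let $\delta>0$ be the spectral gap, so $\sigma(A|_{M^\perp})\le c-\delta$. For a unit $x$ one has $\langle Tx,x\rangle\in\ell$ iff $\langle Ax,x\rangle=c$ iff $x\in M$, so $\ell\cap W(T)=W(P_M T|_M)$, which is compact. Finally, any $w\in\ell\cap\overline{W(T)}$ is a limit $\langle Tx_n,x_n\rangle\to w$ with $\langle Ax_n,x_n\rangle\to c$; writing $x_n=p_n+q_n$ with $p_n\in M$, $q_n\in M^\perp$ and using $\langle Ax_n,x_n\rangle\le c-\delta\|q_n\|^2$ gives $\|q_n\|\to0$, whence a convergent subsequence of $p_n/\|p_n\|$ in $M$ exhibits $w=\langle Tp,p\rangle$ for a unit $p\in M$, i.e.\ $w\in W(T)$. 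Thus $\ell\cap\overline{W(T)}=W(P_M T|_M)\subseteq W(T)$, which is (2b).

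The geometric assertions—(1), (2a), and the support-line statement in (2b)—fall out immediately from the normality of $U$ and Lemma~\ref{le:intersection_num_ranges}. I expect the main obstacle to be the upgrade from ``$\ell$ meets $W(T)$'' to ``$\ell\cap\overline{W(T)}\subseteq W(T)$''. This is exactly where the finite (here one-dimensional) defect numbers are used in an essential way: the codimension-one compression forces $\sigma_{\mathrm{ess}}(\Re(e^{-i\theta}T))=\sigma_{\mathrm{ess}}(\Re(e^{-i\theta}U))$, which pins the top of $\sigma(\Re(e^{-i\theta}T))$ strictly above the essential spectrum and makes the top eigenspace finite dimensional; the accompanying approximate-maximizer argument is then routine, and everything else is convexity.
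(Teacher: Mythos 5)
Your proposal is correct, and for part (1), part (2a), and the support-line assertion in (2b) it follows essentially the paper's route: (1) is the same chord-versus-circular-segment contradiction via Lemma~\ref{le:intersection_num_ranges}; for (2a) you use the isolatedness of the points of $\sigma(U)\setminus\sigma(T)$ where the paper notes that an interior accumulation point of eigenvalues would be an accumulation point of $W(T)$, hence in $\sigma(T)\cap\bbT$ (both work); and the support-line claim is in both cases the observation that $\overline{W(U)}=\overline{\conv}\,\sigma(U)$ meets the line through consecutive eigenvalues $\mu,\mu'$ only in $[\mu,\mu']$. The genuine divergence is the last clause of (2b). The paper's proof only invokes Lemma~\ref{le:intersection_num_ranges} to produce \emph{one} common point of the segment with $W(T)$, and the literal containment $\ell\cap\overline{W(T)}\subseteq W(T)$ is not argued in that proof (in the paper's logic it is recoverable later from Lemma~\ref{le:eigenvalues} and the exposed-point result, Theorem~\ref{th:exposed and corners}(1)). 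You prove the containment directly and in a self-contained way: the rank-$\le 2$ perturbation argument gives $\sigma_{\mathrm{ess}}(\Re(e^{-i\theta}T))=\sigma_{\mathrm{ess}}(\Re(e^{-i\theta}U))\subseteq\{\Re(e^{-i\theta}\zeta):\zeta\in\sigma(T)\cap\bbT\}$, which lies compactly below $c$ because $\ell\cap\overline{W(T)}\subset\bbD$; hence $c$ is an isolated finite-multiplicity eigenvalue of $\Re(e^{-i\theta}T)$, and the approximate-maximizer argument identifies $\ell\cap\overline{W(T)}$ with the compact numerical range of a finite-dimensional compression. This buys a stronger and cleaner conclusion (and isolates exactly where the defect-one hypothesis enters), at the price of Weyl-type essential-spectrum machinery that the paper's convexity-only argument avoids. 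One shared caveat: in part (1) both you and the paper assert that a support line $[z_1,z_2]$ separates $W(T)$ from $I$. This is automatic when $\sigma(T)\cap\bbT\not\subseteq\{z_1,z_2\}$, as your parenthetical notes, but in the degenerate case $\sigma(T)\cap\bbT=\{z_1,z_2\}$ both arcs of $\bbT\setminus\{z_1,z_2\}$ are components and $W(T)$ may lie on the \emph{same} side as $I$; there the separation (and with it the chord argument) genuinely fails, so your ``handled directly'' is not a proof --- though the paper's own ``then it separates $W(T)$ from $I$'' is equally unsupported at this point, so this is a defect of the statement rather than of your argument relative to the paper's.
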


\begin{proof}
If the segment $[z_1, z_2]$ is a support line for $W(T)$, then it separates $W(T)$ from $I$. If $\sigma(U)\cap I$ contained two eigenvalues, Lemma~\ref{le:intersection_num_ranges} would imply the existence of a point in $W(T)$ on the segment joining them, which is a contradiction.

Assume then that  the segment $[z_1, z_2]$ is not a support line for $W(T)$. It is immediate that the eigenvalues of $U$ cannot have a limit point in the interior of the arc $I$, since that point would also be a limit point of $W(T)$. Since $W(T)\subset W(U)$, there exist points in $W(U)$ contained in the open arc $\wideparen{z_1z_2}$, and they form a set of eigenvalues of $U$ which is at most countable. If, for two such eigenvalues $\lambda, \lambda'$,  $\wideparen{\lambda\lambda'}\cap \sigma(U)=\emptyset$, then the segment $[\lambda, \lambda']$ is a support line for $W(T)$. Indeed, it must have a common point with $W(T)$ by Lemma~\ref{le:intersection_num_ranges}. On the other hand, if there are points in the circular segment determined by $\lambda$ and $\lambda'$ that belong to the boundary of $W(T)$, those points should belong to $W(U)$. Since $W(U)$ is the closed convex hull of $\sigma(U)$, one must have points in $\wideparen{\lambda\lambda'}\cap \sigma(U)$---a contradiction.
\end{proof}

If $\HH$ is finite dimensional, then $\sigma(T)\cap\bbT=\emptyset$, so $\bbT\setminus \sigma(T)=\bbT$. The analogue of Theorem~\ref{th:polygon}(2b) for this case
 is contained in~\cite[Theorem 6.3]{GauWu2}.

We end this section with a preliminary result that will be used in Section~\ref{se:segments}.

\begin{lemma}\label{le:eigenvalues}
Let $T$ be a completely nonunitary contraction with $\dim\DD_T=\dim \DD_{T^*}=1$ and $U$  a unitary $1$-dilation of $T$. Assume that $d$ is a support line for $W(T)$ as well as for $W(U)$, and $d\cap \bbT=\{\alpha_1,\alpha_2\}$, with $\alpha_1\not=\alpha_2$. Then $d\cap W(T)$ is nonempty if and only if  $\alpha_1$ and $\alpha_2$ are eigenvalues of $U$. When this happens, $d\cap W(T)$ contains exactly one point, and the set $\{x:\<T x,x\>=w\|x\|^2\}$ is a subspace of dimension one.
\end{lemma}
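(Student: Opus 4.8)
The plan is to translate the hypothesis that a value $\langle Ux,x\rangle$ lies on the support line $d$ into a statement about a single spectral subspace of $U$, and then to count dimensions. Let $\eta\in\bbT$ be an outward normal to $d$, so that $\overline{W(U)}\subset\{z:\operatorname{Re}(\bar\eta z)\le c\}$ and $d=\{z:\operatorname{Re}(\bar\eta z)=c\}$. Put $S=\operatorname{Re}(\bar\eta U)=\tfrac12(\bar\eta U+\eta U^*)$, a self-adjoint operator; the support condition says exactly that $c=\max\sigma(S)$. The first thing I would record is that a unit vector $x$ satisfies $\langle Ux,x\rangle\in d$ if and only if $\langle Sx,x\rangle=c$, and since $cI-S\ge0$ this forces $Sx=cx$. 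By the spectral theorem the eigenspace of $S$ for $c$ is exactly $M:=\ker(U-\alpha_1)\oplus\ker(U-\alpha_2)$, because $\{\lambda\in\bbT:\operatorname{Re}(\bar\eta\lambda)=c\}=d\cap\bbT=\{\alpha_1,\alpha_2\}$. Thus the unit vectors producing points of $d$ are precisely the unit vectors of $M$, and for such $x=x_1+x_2$ with $x_j=E(\{\alpha_j\})x$ one has $\langle Ux,x\rangle=\alpha_1\|x_1\|^2+\alpha_2\|x_2\|^2$.

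For the equivalence, the implication ``$\alpha_1,\alpha_2$ eigenvalues $\Rightarrow d\cap W(T)\ne\emptyset$'' is immediate from Lemma~\ref{le:intersection_num_ranges} applied to the two eigenvalues $\alpha_1,\alpha_2$: their convex hull is the chord $[\alpha_1,\alpha_2]\subset d$, which therefore meets $W(T)$. Conversely, suppose $w\in d\cap W(T)$ and choose a unit $x\in\HH$ with $\langle Tx,x\rangle=w$; since $x\in\HH$ we have $\langle Ux,x\rangle=\langle Tx,x\rangle=w$. By the first paragraph $x\in M$ and $w=\alpha_1\|x_1\|^2+\alpha_2\|x_2\|^2$ is a convex combination of $\alpha_1$ and $\alpha_2$. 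As $T$ is completely nonunitary, $w\in\bbD$, so $w$ differs from both endpoints $\alpha_1,\alpha_2\in\bbT$; hence both coefficients $\|x_1\|^2,\|x_2\|^2$ are strictly positive, so $x_1\ne0$ and $x_2\ne0$, and both $\alpha_1,\alpha_2$ are eigenvalues of $U$.

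It remains to prove the uniqueness and dimension statements, which I would deduce from $\dim(\HH\cap M)=1$. The key input is that no eigenvector of $U$ with eigenvalue on $\bbT$ can lie in $\HH$: if $v\in\HH$ and $Uv=\alpha_j v$, then $Tv=PUv=\alpha_j v$, so $\alpha_j$ would be an eigenvalue of $T$ lying on $\bbT$, contradicting Theorem~\ref{th:exposed and corners}(2). This has two consequences. First, $\ker(U-\alpha_j)\cap\HH=\{0\}$, so restricting the rank-one projection $I-P$ onto $\KK\ominus\HH=\DD_T$ to $\ker(U-\alpha_j)$ gives an injection into $\DD_T$, whence $\dim\ker(U-\alpha_j)\le1$; being eigenvalues, both eigenspaces are exactly one-dimensional and $\dim M=2$. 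Second, $M\not\subset\HH$, so $(I-P)|_M$ has rank $1$, and therefore $\dim(\HH\cap M)=\dim M-1=1$. Writing $\HH\cap M=\bbC x_0$, every unit vector of $\HH\cap M$ is a unimodular multiple of $x_0/\|x_0\|$ and so yields the single value $w=\langle Ux_0,x_0\rangle/\|x_0\|^2=\langle Tx_0,x_0\rangle/\|x_0\|^2$; by the first paragraph these are exactly the elements of $d\cap W(T)$, which is therefore the single point $w$. Finally $\{x:\langle Tx,x\rangle=w\|x\|^2\}$ consists of $0$ together with the nonzero $x$ for which $x/\|x\|\in\HH\cap M$, that is, it equals $\bbC x_0$, a one-dimensional subspace.

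The main obstacle is the dimension bookkeeping in the last paragraph: one must rule out eigenvectors of $U$ sitting inside $\HH$, so that the two eigenspaces stay one-dimensional and $M$ genuinely pokes out of $\HH$; this is precisely where the completely nonunitary, defect-one hypotheses enter, through Theorem~\ref{th:exposed and corners}(2). The reduction of the support-line condition to the top eigenspace of the self-adjoint operator $\operatorname{Re}(\bar\eta U)$ is the conceptual engine that makes both the equivalence and the dimension count fall out uniformly.
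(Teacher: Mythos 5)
Your proof is correct and follows essentially the same route as the paper's: the spectral theorem localizes any unit vector realizing a point of $d$ to $\ker(U-\alpha_1)\oplus\ker(U-\alpha_2)$, and the conclusion then follows from a dimension count resting on the fact that $\HH$ contains no eigenvectors of $U$ with unimodular eigenvalues. The only cosmetic difference is that you bound the eigenspace dimensions and compute $\dim(\HH\cap M)=1$ via the rank-one projection onto $\KK\ominus\HH$, whereas the paper gets the same facts by applying Lemma~\ref{le:intersection_num_ranges} to a repeated eigenvalue and by observing that two distinct points of $d\cap W(T)$ would force the eigenvectors themselves into $\HH$, contradicting Proposition~\ref{pr:basic}(1).
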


\begin{proof}
Suppose $w\in d\cap W(T)$; then $w=\<T\xi, \xi\>=\<U\xi,\xi\>$ for some $\xi$ with $\|\xi\|=1$. Using the spectral theorem for $U$, it is easy to see that $\alpha_1, \alpha_2$ must be eigenvalues of~$U$, and $\xi$ is a linear combination of corresponding eigenvectors $\xi_1$ and $\xi_2$.

We claim that  $\xi_1$ and $ \xi_2$ are uniquely determined (up to a scalar). Indeed, if $U\xi_1=\alpha_1\xi_1$ and $U\xi_1'=\alpha_1\xi_1'$ with $\xi_1\perp \xi_1'$, then Lemma~\ref{le:intersection_num_ranges} implies that $\alpha_1\in W(T)$, contrary to Proposition~\ref{pr:basic}(1).

Assume that there exist two distinct points $w,w'\in\ell\cap W(T)$; then $w=\<T\xi, \xi\>$ and $w'=\<T\xi', \xi'\>$, and $\xi,\xi'$ are two linearly independent vectors in the linear span of $\xi_1$ and $\xi_2$. It would follow that $\xi_1, \xi_2$ are in the linear span of $\xi,\xi'$, and thus in $\HH$. This contradicts again Proposition~\ref{pr:basic}(1). A similar argument shows that $\{x:\<S_\theta x,x\>=w\|x\|^2\}$ cannot contain two linearly independent vectors.
\end{proof}

\section{The functional model}\label{se:functional model}

The theory of the characteristic function of a completely nonunitary contraction can be used in order to obtain more precise results about its numerical range. We recall briefly the main elements of this theory in the scalar case; the general situation can be found in~\cite{SNF}.

We denote by $H^2$ and $H^\infty$ the usual  Hardy spaces in the unit disc, while $L^2=L^2(\bbT)$. Given $\theta\in H^\infty$ with $\|\theta\|_\infty\le 1$,  denote $\Delta(e^{it})=(1-|\theta(e^{it})|^2)^{1/2}$. The \emph{model space} associated to $\theta$ is defined by
\[
\bm{H}_\theta:=(H^2\oplus \overline{\Delta L^2}) \ominus \{\theta f\oplus \Delta f: f\in H^2\},
\]
and the model operator $S_\theta$ is the compression to $\bm{H}_\theta$ of multiplication by $e^{it}$ on $H^2\oplus \overline{\Delta L^2}$. This \emph{functional model} becomes simpler for inner functions $\theta$, in which case $\Delta=0$ and $\Hth=H^2\ominus \theta H^2$.

It is known (see~\cite{Liv, Moel} for  $\theta$ inner and~\cite{SNF} for the general case) that $\sigma(S_\theta)\cap\bbD$ consists of the zeros of $\theta$ and they are all eigenvalues, while
the set $\bbT\setminus \sigma(T)$ is the union of the subarcs $I$ of $\bbT$ accross which $\theta$ can be extended as an analytic function with $|\theta|=1$ on $I$. We   use the notation $\widehat\theta(z)=z\theta(z)$; this is also an analytic function in the neighborhood of each of the connected components of $\bbT\setminus \sigma(S_\theta)$.

The theory of Sz-Nagy---Foias~\cite{SNF}  says that $S_\theta$ is a completely nonunitary contraction, and any completely nonunitary contraction $T$ with $\dim\DD_T=\dim \DD_{T^*}=1$ is unitarily equivalent to some $S_\theta$. In this case $\theta$ is called the \emph{characteristic function} of~$T$; it is inner precisely when $T^n\to0$ strongly. Since the numerical range is a unitary invariant,  in the sequel  we  study the operator $S_\theta$ for a given function~$\theta$.

\begin{theorem}\label{th:facts theta}
All unitary dilations of $S_\theta$ to $\HH\oplus\bbC$  can be indexed by a parameter $\lambda\in \bbT$, in such a way that a point $\alpha\in\bbT$ is an eigenvalue of $U_\lambda$ if and only if $\Wth$ has an angular derivative in the sense of Carath\'eodory at $\alpha$ and $\Wth(\alpha)=\lambda$. In particular, $\sigma(U_\lambda)\setminus\sigma(S_\theta)$ consists precisely of the solutions in $\bbT\setminus \sigma(S_\theta)$ of the equation $\widehat\theta(z)=\lambda$. 

We have then
\begin{equation}\label{eq:facts theta}
\overline{W(S_\theta)}=\bigcap_{\lambda\in\bbT} \overline{W(U_\lambda)}.
\end{equation}
\end{theorem}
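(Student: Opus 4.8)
The plan is to treat the two assertions separately: the eigenvalue description of the family $U_\lambda$, which carries the function-theoretic content, and the intersection formula \eqref{eq:facts theta}, which I expect to be an immediate consequence of Theorem~\ref{th:intersection result}. For the parametrization, I would start from the discussion in Section~\ref{se:unitary dilations}: every unitary $1$-dilation of $S_\theta$ is unitarily equivalent to some $U_\Omega$, where $\Omega$ runs over the unitaries from $\ker\widetilde{S_\theta}$ to $\ker\widetilde{S_\theta}^*$, both of which are one-dimensional here. Such an $\Omega$ is just multiplication by a unimodular constant, so the dilations form a circle; I would then fix the indexing $\lambda\mapsto U_\lambda$ so as to match the eigenvalue equation produced in the next step, the correct normalization being dictated by that computation.

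The main step is the boundary eigenvalue characterization, and here I would use the functional model. The space $\HH\oplus\DD_{S_\theta}$ on which the dilations act is canonically $\bm H_{\widehat\theta}$: for $\theta$ inner, $\widehat\theta=z\theta$ is again inner and $\bm H_{\widehat\theta}=H^2\ominus\widehat\theta H^2$ decomposes as $\Hth\oplus\bbC\theta$, the one extra dimension being exactly $\DD_{S_\theta}$. Using the explicit form of the unitary $1$-dilations in the model (as in~\cite{BL, SNF}), I would write the eigenvalue equation $U_\lambda\xi=\alpha\xi$ for $\alpha\in\bbT$ and reduce it to two conditions: a scalar matching condition forcing $\widehat\theta(\alpha)=\lambda$, and the requirement that the associated boundary reproducing kernel $\frac{1-\overline{\widehat\theta(\alpha)}\,\widehat\theta(z)}{1-\bar\alpha z}$ actually lie in $\bm H_{\widehat\theta}$, i.e.\ have finite norm. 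By the Julia--Carath\'eodory theorem the latter holds precisely when $\liminf_{z\to\alpha}(1-|\widehat\theta(z)|)/(1-|z|)<\infty$, which is exactly the condition that $\widehat\theta$ have an angular derivative in the sense of Carath\'eodory at $\alpha$; in that case the kernel is the eigenvector, and its essential uniqueness is guaranteed by the analysis already carried out in Lemma~\ref{le:eigenvalues}.

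For the ``in particular'' clause, I would invoke the arc description recalled just before the theorem: when $\alpha\in\bbT\setminus\sigma(S_\theta)$, the function $\theta$, and hence $\widehat\theta$, extends analytically across $\alpha$ with modulus $1$, so the angular derivative exists automatically and the characterization collapses to the single equation $\widehat\theta(\alpha)=\lambda$. Finally, \eqref{eq:facts theta} follows at once: since $\{U_\lambda:\lambda\in\bbT\}$ exhausts, up to unitary equivalence, the unitary $1$-dilations of $S_\theta$, Theorem~\ref{th:intersection result}(2) applied with $n=1$ yields $\overline{W(S_\theta)}=\bigcap_{\lambda\in\bbT}\overline{W(U_\lambda)}$.

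I expect the middle step to be the principal obstacle. The delicate points are producing the explicit model formula for $U_\lambda$ and reducing the eigenvalue equation cleanly, while keeping track of the shift factor that replaces $\theta$ by $\widehat\theta=z\theta$ (so that the relevant kernels and angular derivatives are those of $\widehat\theta$, not of $\theta$), and verifying that a finite-norm boundary kernel is a genuine eigenvector rather than a formal solution. The general, non-inner case forces the same reduction to be carried out in $H^2\oplus\overline{\Delta L^2}$, where the bookkeeping with $\Delta$ is the only further complication.
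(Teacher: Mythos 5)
Your derivation of \eqref{eq:facts theta} coincides exactly with the paper's: both obtain it by taking $n=1$ in Theorem~\ref{th:intersection result}(2). For everything else, however, the paper's proof is a two-line citation --- the parametrization of the unitary $1$-dilations and the angular-derivative characterization of their eigenvalues are attributed to Clark~\cite{Cl} in the inner case and to Ball--Lubin~\cite{BL} in general, with no argument given --- whereas what you propose is, in effect, a proof of those cited results. Your route (identify $\HH\oplus\DD_{S_\theta}$ with $\bm{H}_{\widehat\theta}$ via $\bm{H}_{\widehat\theta}=\Hth\oplus\bbC\theta$ in the inner case, realize the $1$-dilations as the circle of rank-one unitary perturbations indexed by $\lambda\in\bbT$, and show that $\alpha\in\bbT$ is an eigenvalue of $U_\lambda$ exactly when $\widehat\theta(\alpha)=\lambda$ and the boundary kernel $\bigl(1-\overline{\widehat\theta(\alpha)}\,\widehat\theta(z)\bigr)/(1-\bar\alpha z)$ has finite norm, which by Ahern--Clark and Julia--Carath\'eodory is the angular-derivative condition) is the standard proof of Clark's theorem and is correct; the reduction of the ``in particular'' clause to the equation $\widehat\theta(z)=\lambda$, using analytic continuation across $\bbT\setminus\sigma(S_\theta)$ to make the angular-derivative condition automatic, is also right. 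The trade-off is purely one of self-containedness versus length: your argument makes the theorem independent of \cite{Cl, BL} at the cost of actually carrying out the kernel computation (and, in the non-inner case, the $\overline{\Delta L^2}$ bookkeeping you flag), which is precisely the work the paper chooses to delegate.
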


\begin{proof}
Most of the assertions of the theorem are proved in~\cite{Cl} for $\theta$ inner and in~\cite{BL} for general $\theta$. The only exception is formula~\eqref{eq:facts theta}, which is obtained by taking $n=1$ in Theorem~\ref{th:intersection result}(2). 
\end{proof}

Note that $\widehat\theta(z)=\lambda$ might have no solutions in some (or any) component of $\bbT\setminus \sigma(S_\theta)$; it follows that $U_\lambda$ has no eigenvalues therein. If there is $\lambda\in\bbT$ such that the equation $\widehat\theta(z)=\lambda$ has no solutions in the whole set $\bbT\setminus \sigma(S_\theta)$, then
$
\overline{W(S_\theta)}
$
is precisely the closed convex hull of $\sigma(S_\theta)\cap\bbT$.

The space $\Hth$ is finite dimensional if and only if $\theta$ is a finite Blaschke product. This is the case studied extensively by Gau and Wu (see~\cite{GauWu, GauWu2}) and we exclude it in the sequel. Then $S_\theta$ acts on an infinite dimensional space, and $ \overline{W(S_\theta)}\cap\bbT=\sigma(S_\theta)\cap \bbT\not=\emptyset$.

In the remainder of this section we discuss the behavior of $\Wth$ at an endpoint of a component of $\bbT\setminus \sigma(S_\theta)$. If $I=\{e^{it}: t_1< t< t_2\}$ is such a connected component, then $\Wth$ can be extended as an analytic function across $I$ to an open set $O_I\supset I$; we  still denote by $\Wth$ the extended function. We can define a continuous function $\psi_I:(t_1, t_2)\to\bbR$ by the formula $\psi_I(t)=\Arg\widehat\theta(e^{it})=-i\log\widehat\theta(e^{it})$, where the choice of the branch of the logarithm is such as to insure the continuity of $\psi_I$. We have also
\begin{equation}\label{eq:derivative of psi_I}
\psi_I'(t)=e^{it} \frac{{\Wth}'(e^{it})}{\Wth(e^{it})}.
\end{equation}

The following two propositions state essentially that  the nontangential behavior of $\widehat\theta$ is closely related to its behavior along the arc~$I$. They are certainly known, but we have not found an appropriate reference, so we include the proofs for completeness. In a different context, a discussion of the behavior of an analytic function in the neighborhood of an isolated singularity can be found in~\cite{CGP3}.

\begin{proposition}\label{pr:limit of the function}
With the above notations, the following are equivalent:
\begin{enumerate}
\item $\widehat\theta$  has a nontangential limit at $e^{it_1}$.

\item  $\lim_{t\searrow t_1}\psi_I(t)>-\infty$.

\item There exists $\delta>0$ such that $\psi_I$ is one-to-one on $(t_1, t_1+\delta)$, and $\psi_I$  can be continuously extended  to $t_1$.
\end{enumerate}

When these conditions are not satisfied,  the equation $\widehat\theta(z)=\lambda$ has, for each $\lambda\in\bbT$, an infinite sequence of roots in $I$ converging to $e^{it_1}$.
\end{proposition}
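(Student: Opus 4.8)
The plan is to analyze $\widehat\theta$ near the endpoint $e^{it_1}$ using the interplay between its nontangential behavior and its behavior along the arc $I$. The function $\psi_I(t) = \Arg\widehat\theta(e^{it})$ is the continuous argument of a unimodular analytic function on $I$, so by formula~\eqref{eq:derivative of psi_I} its derivative $\psi_I'(t) = e^{it}{\Wth}'(e^{it})/\Wth(e^{it})$ is real; in fact, because $\widehat\theta$ maps the arc $I$ into $\bbT$ and is analytic across $I$, a standard Schwarz-reflection computation shows $\psi_I'(t) > 0$ on $I$ (the argument of an inner-type function increases as one traverses the boundary in the positive direction). Thus $\psi_I$ is strictly increasing on $(t_1,t_2)$, hence automatically one-to-one, and the limit $L := \lim_{t\searrow t_1}\psi_I(t)$ exists in $[-\infty,+\infty)$. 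This monotonicity is the structural fact that organizes the whole proof.

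**Next I would** establish the cycle of implications. For (2)$\Rightarrow$(3): if $L > -\infty$, then since $\psi_I$ is strictly increasing it is one-to-one on any $(t_1, t_1+\delta)$, and $\psi_I(t)\to L$ gives a continuous extension to $t_1$. For (3)$\Rightarrow$(1): if $\psi_I$ extends continuously with finite limit $L$, then $\widehat\theta(e^{it}) = e^{i\psi_I(t)} \to e^{iL}$ as $t\searrow t_1$ \emph{along the circle}; to upgrade this to a genuine nontangential limit one invokes the angular-derivative / Julia–Carath\'eodory circle of ideas, or more directly the fact that a bounded analytic function with a boundary limit along the circle through an analytic arc has the corresponding nontangential limit at the endpoint. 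For (1)$\Rightarrow$(2): if $\widehat\theta$ has a nontangential limit $c$ at $e^{it_1}$, then necessarily $|c|=1$ (radial limits of the unimodular boundary function persist), and the boundary values $\widehat\theta(e^{it}) = e^{i\psi_I(t)}$ must approach $c$, forcing $e^{i\psi_I(t)}$ to converge; combined with monotonicity of $\psi_I$ this forces $L > -\infty$ (an infinite limit would make $e^{i\psi_I(t)}$ wind around $\bbT$ infinitely often, precluding convergence).

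**For the final assertion**, suppose the equivalent conditions fail, so $L = -\infty$. Then as $t$ ranges over $(t_1, t_1+\delta)$, the strictly increasing function $\psi_I$ sweeps out an interval of the form $(-\infty, \psi_I(t_1+\delta))$, whose length is infinite. Consequently, for any prescribed $\lambda = e^{i\varphi} \in \bbT$, the value $\varphi$ is hit by $\psi_I$ modulo $2\pi$ infinitely many times: there is a decreasing sequence $t^{(m)} \searrow t_1$ with $\psi_I(t^{(m)}) = \varphi - 2\pi m$ for all large $m$, i.e. $\widehat\theta(e^{it^{(m)}}) = e^{i\varphi} = \lambda$. These are exactly roots of $\widehat\theta(z) = \lambda$ in $I$ converging to $e^{it_1}$, giving the claimed infinite sequence.

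**The main obstacle** I anticipate is the step (3)$\Rightarrow$(1), where the convergence of $\widehat\theta$ along the circle must be promoted to a full nontangential (Carath\'eodory) limit; here one cannot merely quote monotonicity but must use analyticity of $\widehat\theta$ across $I$ together with the boundedness $|\widehat\theta|\le 1$ in $\bbD$, invoking a Lindel\"of-type theorem (a bounded analytic function with a radial limit along a curve landing at a boundary point has the same nontangential limit). The monotonicity and the $L=-\infty$ trichotomy handle everything else cleanly, so the analytic-continuation/angular-derivative input at the endpoint is the delicate part to make rigorous.
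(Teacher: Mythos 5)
Your argument follows the paper's proof in all essentials: the equivalence of (2) and (3) via the monotonicity of $\psi_I$, the passage between the limit of $\Wth$ along the arc $I$ and its nontangential limit via Lindel\"of's theorem applied to $\Wth$ on the extended domain $O_I$, and the winding argument (using $\lim_{t\searrow t_1}\psi_I(t)=-\infty$) for the final assertion. The one point to tighten is that the implication (1)$\Rightarrow$(2) also requires the Lindel\"of argument on the extended domain --- the arc $I$ meets $e^{it_1}$ tangentially to $\bbT$, so convergence of the boundary values along $I$ is not a formal consequence of the nontangential limit from inside $\bbD$ --- exactly the same input you already identify as the delicate step in (3)$\Rightarrow$(1).
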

\begin{proof}
It is immediate that (2) is equivalent to (3). On the other hand, (2) states the existence of the limit of $\Wth$ at $e^{it_1}$ along the arc $I$. The equivalence between (2) and (1) is then a consequence of Lindel\"of's theorem~\cite{Lin} (see \cite[Theorem 2.3.1]{CoLo} for a more recent presentation) applied to $\Wth$ defined on $O_I$.

Finally, since $\psi_I$ is increasing,  if (2) is not satisfied, then $\lim_{t\searrow t_1}\psi_I(t)=-\infty$. The last assertion is then immediate.
\end{proof}

\begin{proposition}\label{pr:limit of the derivative}
Assume \emph{(1)--(3)} in Proposition~\emph{\ref{pr:limit of the function}} are true. Then the following are equivalent:

\begin{enumerate}
\item
$\widehat\theta$ has an angular derivative in the sense of Carath\'eodory at $e^{it_1}$.
\item
$\psi_I$ is right differentiable at $t_1$.
\end{enumerate}

When these conditions are not satisfied,  $\psi_I$ has an infinite right derivative at~$t_1$.
\end{proposition}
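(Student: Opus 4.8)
The plan is to convert both conditions into statements about a single difference quotient of $\widehat\theta$ at $\zeta_1 := e^{it_1}$, one read nontangentially and one read along the arc $I$, and then to reconcile the two readings. Write $\omega := \lim_{t\searrow t_1}\widehat\theta(e^{it}) = e^{i\psi_I(t_1^+)}$, a well-defined point of $\bbT$ by the standing hypothesis (Proposition~\ref{pr:limit of the function}). Recall that, by the Julia--Carath\'eodory theorem, the self-map $\widehat\theta$ of $\bbD$ has an angular derivative in the sense of Carath\'eodory at $\zeta_1$ precisely when $d := \liminf_{z\to\zeta_1}(1-|\widehat\theta(z)|)/(1-|z|) < \infty$, the liminf being nontangential; in that case $d\in(0,\infty)$, the quotient $(\omega - \widehat\theta(z))/(\zeta_1 - z)$ has nontangential limit $(\omega/\zeta_1)\,d$, and $\widehat\theta'$ has the same nontangential limit. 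Thus condition (1) is equivalent to the finiteness of $d$, and I would show that $d$ equals the right derivative $\psi_I'(t_1^+)$, with the convention $d=+\infty$ when the angular derivative fails to exist.

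First I would record the arc reading. Using $\widehat\theta(e^{it}) = e^{i\psi_I(t)}$ on $I$ together with $\omega = e^{i\psi_I(t_1^+)}$, differentiating numerator and denominator in $t$ gives $\lim_{t\searrow t_1}(\omega - \widehat\theta(e^{it}))/(\zeta_1 - e^{it}) = (\omega/\zeta_1)\,\psi_I'(t_1^+)$ whenever the right derivative exists in $[0,+\infty]$; this is nothing but \eqref{eq:derivative of psi_I} evaluated in the limit $t\searrow t_1$. That the one-sided limit of $\psi_I'(t)$ exists in $[0,+\infty]$, so that the right derivative is meaningful and coincides with it, follows from the monotone structure near $\zeta_1$: $\psi_I$ is increasing, and the auxiliary Carath\'eodory function $q(z) = (1+\bar\omega\widehat\theta(z))/(1-\bar\omega\widehat\theta(z))$ has positive real part on $\bbD$ and is purely imaginary on $I$, with $\operatorname{Im} q(e^{it}) = \cot\big((\psi_I(t)-\psi_I(t_1^+))/2\big)$ monotone in $t$ near $t_1$.

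The decisive step is to identify the nontangential limit of $(\omega - \widehat\theta(z))/(\zeta_1 - z)$ with its limit along the arc $I$. Here the hypothesis that $\widehat\theta$ continues analytically across $I$ is essential: the quotient is analytic on a one-sided neighborhood of $I$ in $\bbD$, extends across $I\setminus\{\zeta_1\}$, and, by Julia's lemma (the horodisc estimate attached to the coefficient $d$), is bounded on a region approaching $\zeta_1$ that contains both a nontangential ray and the arc. Lindel\"of's theorem---already the tool behind Proposition~\ref{pr:limit of the function}---then forces the two limits to agree, yielding $\psi_I'(t_1^+) = d$. In the degenerate case $d=+\infty$ the same boundedness and monotonicity considerations show that the arc quotient tends to $+\infty$, so $\psi_I'(t_1^+) = +\infty$. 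Granting $\psi_I'(t_1^+) = d$ (with $d=\infty$ allowed), the equivalence of (1) and (2) and the final assertion about the infinite right derivative follow at once.

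I expect the transfer between the tangential (arc) and nontangential readings to be the main obstacle, precisely because angular derivatives are intrinsically nontangential objects while $\psi_I'$ lives on $\bbT$: the approach to $\zeta_1$ along $I$ is tangential, so Lindel\"of's theorem can be invoked only after the required boundedness of $(\omega-\widehat\theta(z))/(\zeta_1-z)$ has been established on a region simultaneously containing the arc and a Stolz angle. Pinning down that boundedness from Julia's lemma---and, in the case $d=\infty$, the correct blow-up rate---is the technical heart of the argument; the remaining computations are routine.
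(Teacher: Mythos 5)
Your framing via the Julia--Carath\'eodory theorem is reasonable, but the step you yourself identify as decisive --- transferring the limit of the difference quotient $(\omega-\Wth(z))/(\zeta_1-z)$ between the nontangential approach and the approach along the arc $I$ --- is not actually established, and the tool you propose for it does not suffice. Julia's lemma gives
$|\omega-\Wth(z)|^2/(1-|\Wth(z)|^2)\le d\,|\zeta_1-z|^2/(1-|z|^2)$, hence
\[
\left|\frac{\omega-\Wth(z)}{\zeta_1-z}\right|\;\le\;\frac{2d\,|\zeta_1-z|}{1-|z|^2},
\]
which is bounded inside a Stolz angle but blows up as $z$ approaches $\bbT$ tangentially; on the arc itself it gives no information at all (there $1-|\Wth|^2=0$). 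So Julia's lemma does not produce a region containing both a Stolz angle and the arc on which the quotient is bounded, and Lindel\"of's theorem cannot be invoked as described. A second, independent gap: you assert that the one-sided limit defining $\psi_I'(t_1^+)$ exists in $[0,+\infty]$ ``from the monotone structure,'' but the only monotonicity available at that point is that of $\psi_I$ itself (equivalently of $\cot((\psi_I(t)-\psi_I(t_1^+))/2)$), and an increasing function need not have a one-sided derivative at a prescribed point --- its difference quotients can oscillate. The existence of this limit is part of what has to be proved, not a starting point.

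The paper's proof avoids both difficulties by being completely explicit. It writes $\Wth$ in the form \eqref{eq:decomposition of function}, invokes the Ahern--Clark criterion \eqref{eq:angular derivative} for the angular derivative (rather than the liminf form of Julia--Carath\'eodory), and computes $\psi_I'(t)$ from \eqref{eq:derivative of psi_I} as the explicit positive series/integral \eqref{eq:derivative of psi for blaschke} and \eqref{eq:derivative of psi for singular}. After discarding finitely many Blaschke factors one arranges that each term is monotone in $t$ for $t\searrow t_1$, so $\lim_{t\searrow t_1}\psi_I'(t)$ exists in $[0,+\infty]$ by monotone convergence and equals precisely the Ahern--Clark sum at $e^{it_1}$; this yields the equivalence and the infinite-derivative statement in one stroke, with no tangential-versus-nontangential transfer needed. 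If you want to salvage a ``soft'' argument in your spirit, the workable route is not Lindel\"of but the Herglotz representation of the Carath\'eodory function $(\omega+\Wth)/(\omega-\Wth)$ (i.e., the Aleksandrov--Clark measure at $\omega$): the angular derivative corresponds to a point mass at $\zeta_1$, and the behavior of $\psi_I$ on $I$ is read off from the conjugate Poisson integral of that measure --- but that is in substance the same explicit computation, merely reorganized.
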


\begin{proof}
Write
\begin{equation}\label{eq:decomposition of function}
\Wth(z)=B(z)\exp\left(-\int_{[0,2\pi)} \frac{e^{it}+z}{e^{it}-z}\,d\nu(t)\right)
\end{equation}
where $B$ is a Blaschke product with zeros $(a_n)$ and $\nu$ is a positive measure. According to a result of Ahern and Clark~\cite{AC1, AC2},  $\Wth$ has an angular derivative at $\zeta\in\bbT$ if and only if
\begin{equation}\label{eq:angular derivative}
\sum_n\frac{1-|a_n|^2}{|\zeta-a_n|^2} + \int_{[0,2\pi)} \frac{d\nu(s)}{|e^{is}-\zeta|^2} <\infty.
\end{equation}

Both conditions (1) and (2) are true for a product if and only if they are true for each of the factors, so we  discuss separately the two factors in~\eqref{eq:decomposition of function}. The same argument implies that we may assume that $a_n=r_n e^{is_n}$, with $r_n\to 1$, $s_n\to t_1$, and $|s_n-t_1|<\pi/4$.

For the Blaschke product $B$,~\eqref{eq:derivative of psi_I} yields
\begin{equation}\label{eq:derivative of psi for blaschke}
\psi_I'(t)=\sum_n \frac{1-|a_n|^2}{|e^{it}-a_n|^2}.
\end{equation}
In particular, if $t_2>t_1$,
\[
\psi_I(t_2)-\psi_I(t_2)=\sum_n \int_{t_1}^{t_2}\frac{1-|a_n|^2}{|e^{it}-a_n|^2}\,dt,
\]
and the value of each integral is the angle at $a_n$ subtended by the arc $\wideparen{e^{it_1}e^{it_2}}$~\cite[p. 41]{Gar}. This angle is bounded below if $s_n>t_1$; as $\psi_I$ is  bounded on $I$, we may assume (discarding some of the zeros, if necessary)
 that   $s_n \le t_1$ for all~$n$. Then, for $t-t_1>0$ and sufficiently small,
\[
\frac{1-|a_n|^2}{|e^{it}-a_n|^2}\le \frac{1-|a_n|^2}{|e^{it_1}-a_n|^2}.
\]
Comparing then~\eqref{eq:angular derivative} (for $\zeta=e^{it_1}$) and~\eqref{eq:derivative of psi for blaschke}, the equivalence of (1) and (2) follows in this case; moreover
\begin{equation}\label{eq:value of psi'(t_1) blaschke}
(\psi_I')_+(t_1)=\lim_{t\searrow t_1} \psi_I'(t)= \sum_n\frac{1-|a_n|^2}{|e^{it_1}-a_n|^2}.
\end{equation}

If $\Wth$ is the singular factor in~\eqref{eq:decomposition of function}, then~\eqref{eq:derivative of psi_I} yields
\begin{equation}\label{eq:derivative of psi for singular}
\psi_I'(t)= 2 \int_{[0,2\pi)} \frac{d\nu(s)}{|e^{is}-e^{it}|^2}.
\end{equation}
For $t-t_1>0$  sufficiently small,
\[
\frac{d\nu(s)}{|e^{is}-e^{it}|^2} \ge \frac{d\nu(s)}{|e^{is}-e^{it_1}|^2},
\]
and again, comparing ~\eqref{eq:angular derivative} (for $\zeta=e^{it_1}$) and~\eqref{eq:derivative of psi for blaschke}, the equivalence of (1) and (2) follows. Moreover,
\begin{equation}\label{eq:value of psi'(t_1) singular}
(\psi_I')_+(t_1)=\lim_{t\searrow t_1} \psi_I'(t)=  2 \int_{[0,2\pi)} \frac{d\nu(s)}{|e^{is}-e^{it_1}|^2}.
 \end{equation}
The last assertion of the proposition follows from~\eqref{eq:value of psi'(t_1) blaschke} and ~\eqref{eq:derivative of psi for singular}.
\end{proof}

We have stated Propositions~\ref{pr:limit of the function} and~\ref{pr:limit of the derivative} for the left endpoint of $(t_1,t_2)$; similar statements are valid for~$t_2$. Note that if $e^{it_1}$ is an isolated point of $\bbT\cap\sigma(T)$ and is thus the endpoint of two components of $\bbT\setminus\sigma(T)$, then the equivalent assertions of Proposition~\ref{pr:limit of the function} cannot be true on both sides of $e^{it_2}$.

Finally, let $I=\{e^{it}: t_1< t< t_2\}$ be, as above, a connected component of $\bbT\setminus\sigma(S_\theta)$. The set
\[
I'=\{t\in(t_1, t_2): \text{ there exists }s\in(t,t_2), \text{ such that }\widehat\theta(e^{is})= \widehat\theta(e^{it})\}
\]
is either empty or equal to an open interval $(t_1, t_1')$.
We define the function $\tau_I:(t_1, t_1')\to (t_1, t_2)$ by
\begin{equation}\label{eq:definition tau}
\tau_I(t)=\min \{s>t:\widehat\theta(e^{is})= \widehat\theta(e^{it})\}.
\end{equation}
Then $\tau_I$ is strictly increasing and differentiable on $(t_1, t_1')$. It follows from Theorem~\ref{th:polygon} that for any $t\in (t_1, t_1')$ the line determined by $e^{it}$ and $e^{i\tau_I(t)}$ is a support line for $W(S_\theta)$, and that any support line for $W(S_\theta)$ that intersects $\bbT$ in two points in $\bbT\setminus \sigma(S_\theta)$ is of this form for some $I$ and $t$.

\section{Segments in the boundary}\label{se:segments}

In this section we  fix a function $\theta\in H^\infty$ with $\|\theta\|_\infty\le1$.
In order to investigate the boundary of $W(T)$, we  discuss separately its intersection with each of the circular segments that correspond to connected components of $\bbT\setminus \sigma(S_\theta)$. Theorem~\ref{th:facts theta}  shows that this study depends on the behavior of $\widehat\theta$ on such an interval.

We already know by Theorem~\ref{th:exposed and corners} that the boundary of $W(S_\theta)$ has no corners in the open disk $\bbD$ and contains no eigenvalues of $S_\theta$. It was proved in~\cite{Ber}, for $\theta$ a finite Blaschke product, that the boundary does not contain straight line segments. As we show in this section, this is not true for all inner functions.

For $w_1\ne w_2$, we  call $[w_1,w_2]$ a \emph{maximal} segment of $\partial W(S_\theta)$ if it is not contained strictly in a larger segment included in $\partial W(S_\theta)$.
\begin{theorem}\label{th:segments in D}
There are no maximal segments of $\partial W(S_\theta)$ contained in $\bbD$.
\end{theorem}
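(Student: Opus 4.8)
The plan is to argue by contradiction. Suppose $[w_1,w_2]$ is a maximal segment of $\partial W(S_\theta)$ with $w_1\ne w_2$ and $[w_1,w_2]\subset\bbD$, and let $d$ be the line containing it. Since $[w_1,w_2]$ lies on the boundary of the convex set $\overline{W(S_\theta)}$, the line $d$ is a support line. As the segment is contained in the open disc, $d$ meets $\bbT$ in two distinct points $\alpha_1,\alpha_2$; neither of them can lie in $\sigma(S_\theta)$, for otherwise, by Proposition~\ref{pr:basic}(3), that point would belong to $\overline{W(S_\theta)}\cap d$, so $d\cap\overline{W(S_\theta)}$ would extend all the way to $\bbT$ and $[w_1,w_2]$ would not be a maximal segment contained in $\bbD$. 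Thus $\alpha_1,\alpha_2$ lie in a single connected component $I=\wideparen{z_1z_2}$ of $\bbT\setminus\sigma(S_\theta)$, and by the correspondence recorded at the end of Section~\ref{se:functional model} we may write $\alpha_1=e^{it_0}$, $\alpha_2=e^{i\tau_I(t_0)}$ with $t_0\in(t_1,t_1')$ and $\Wth(\alpha_1)=\Wth(\alpha_2)=:\lambda$.

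Next I would set up a one-parameter family of attaining vectors. For every $t\in(t_1,t_1')$ the line $d_t$ through $e^{it}$ and $e^{i\tau_I(t)}$ is a support line for $W(S_\theta)$; since $\Wth$ is analytic across the open arc $I$, both of these points carry an ordinary, hence angular, derivative, so by Theorem~\ref{th:facts theta} they are eigenvalues of $U_{\Wth(e^{it})}$, say with eigenvectors $\eta(t)$ and $\eta_*(t)$. By the definition of $\tau_I$ they are \emph{consecutive} eigenvalues of this dilation, so $d_t$ is also a support line for $W\bigl(U_{\Wth(e^{it})}\bigr)$. Lemma~\ref{le:eigenvalues} then shows that $d_t\cap W(S_\theta)$ reduces to a single point, attained on the one-dimensional space $\operatorname{span}\{\eta(t),\eta_*(t)\}\cap\HH$; let $\xi_t$ be a unit vector spanning it and put $w(t)=\langle S_\theta\xi_t,\xi_t\rangle\in W(S_\theta)$. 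For $t=t_0$ this single point is some $w_0\in[w_1,w_2]$.

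The key step is to prove that $t\mapsto w(t)$ is continuous at $t_0$. Here I would invoke the functional model: over the open arc $I$ the eigenvectors $\eta(t),\eta_*(t)$ are given by explicit reproducing-kernel formulas depending continuously (indeed real-analytically) on the base point, as in~\cite{Cl,BL}, and $\tau_I$ is differentiable. Moreover neither $\eta(t_0)$ nor $\eta_*(t_0)$ lies in $\HH$: a unit eigenvector $\eta\in\HH$ of the unitary dilation at a point $\alpha\in\bbT$ would give $\langle S_\theta\eta,\eta\rangle=\alpha\in\bbT$, contradicting $W(S_\theta)\subset\bbD$ from Proposition~\ref{pr:basic}(1). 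Hence the two-plane $\operatorname{span}\{\eta(t),\eta_*(t)\}$ meets the fixed hyperplane $\HH$ transversally in a one-dimensional line depending continuously on $t$, so $\xi_t$ (up to phase), and therefore $w(t)$, is continuous near $t_0$.

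Finally I would compare this with the geometry of the putative edge. For every $t$ for which $d_t$ meets $\overline{W(S_\theta)}$ in a single point, that point is exposed, hence lies in $W(S_\theta)$ by Theorem~\ref{th:exposed and corners}(1), and by Lemma~\ref{le:eigenvalues} it coincides with $w(t)$. Since a compact convex set has at most countably many edges, such $t$ accumulate at $t_0$ from both sides; letting $t\to t_0$ through them, the supporting point of $\overline{W(S_\theta)}$ on $d_t$ tends to the two endpoints of the edge, so the one-sided limits of $w(t)$ at $t_0$ are $w_1$ and $w_2$. Continuity of $w$ then forces $w_1=w_0=w_2$, contradicting $w_1\ne w_2$. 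The main obstacle is exactly the continuity claim of the third paragraph: in infinite dimensions $W(S_\theta)$ need not be closed, and an edge of $\partial W(S_\theta)$ is precisely the phenomenon of the attaining unit vectors failing to converge as the support direction rotates; ruling this out is where the analytic dependence supplied by the functional model is indispensable.
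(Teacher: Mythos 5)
Your argument is correct in outline, but it takes a genuinely different route from the paper's. Both proofs begin identically: the support line $d$ of the putative edge meets $\bbT$ at two points $e^{is_1}=e^{it_0}$ and $e^{is_2}=e^{i\tau_I(t_0)}$ of a single component $I$ of $\bbT\setminus\sigma(S_\theta)$, and both rest on the same convex-geometric observation that, as $t$ crosses $t_0$, the nearby support lines $[e^{it},e^{i\tau_I(t)}]$ pivot around the two \emph{distinct} endpoints of the edge, one endpoint for each side. The difference is where the contradiction is extracted. The paper never leaves the function $\tau_I$: the pivoting, via similar triangles, forces
\[
(\tau_I)'_-(s_1)=\frac{|e^{is_2}-w_1|}{|e^{is_1}-w_1|},\qquad (\tau_I)'_+(s_1)=\frac{|e^{is_2}-w_2|}{|e^{is_1}-w_2|},
\]
and since $\tau_I$ is differentiable at $s_1$ (because $\Wth$ extends analytically across the open arc $I$ with $\psi_I'\ge 1$), these one-sided derivatives must agree, which already gives $w_1=w_2$. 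Nothing beyond the differentiability of $\tau_I$ is used. You instead extract the contradiction from the contact point $w(t)=d_t\cap W(S_\theta)$ supplied by Lemma~\ref{le:eigenvalues}: its one-sided limits at $t_0$ are $w_1$ and $w_2$, while continuity of $t\mapsto w(t)$ at $t_0$ would force these to coincide. That continuity claim is the load-bearing step, and you rightly flag it as the main obstacle: making it rigorous requires norm-continuity on $I$ of the eigenvector family of the dilations $U_\lambda$ (via the explicit boundary reproducing kernels of the Clark/Ball--Lubin description), combined with your correct observation that no such eigenvector can lie in $\HH$, so that the attaining line $\mathrm{span}\{\eta(t),\eta_*(t)\}\cap\HH$ varies continuously. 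This can all be carried out, but it imports machinery the paper deliberately avoids --- the paper never writes down the eigenvectors of $U_\lambda$ --- so your proof is heavier exactly where the paper's is elementary. What your route buys is an explicit description of the attaining vectors and a clear statement that the absence of edges is a continuity property of the attaining line; what the paper's route buys is self-containedness, since the first-derivative mismatch of $\tau_I$ does all the work.
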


\begin{proof}
Assume $[w_1, w_2]$ is a maximal segment of $\partial W(S_\theta)$ contained in $\bbD$. Denote by $e^{is_1}$ and $e^{is_2}$, $s_1<s_2$  the intersections of the support line $d$ of this segment with $\bbT$ (with the points $e^{is_1}, w_1, w_2, e^{is_2}$ in this order). Then
$e^{is_1}$ and $e^{is_2}$  belong to the same connected component $I$ of $\bbT\setminus\sigma(T)$; the function $\tau_I$ is defined in $s_1$ and $\tau_I(s_1)=s_2$. 

\begin{figure}[h]
\begin{center}
\def\svgwidth{3in}
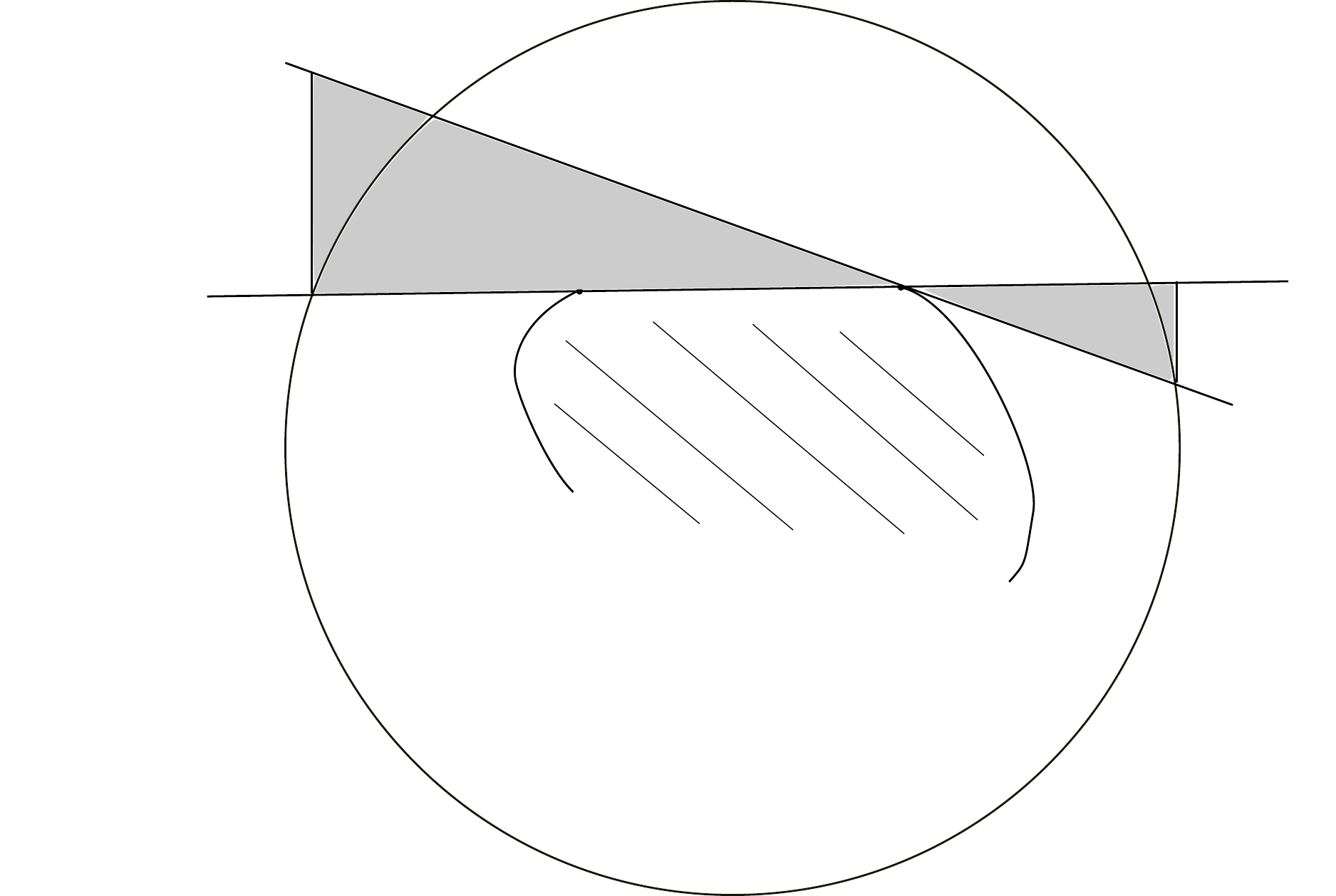
\caption{}
\end{center}
\end{figure}

A geometrical argument shows that the left hand derivative of $\tau_I$ in $s_1$ may be estimated by using that, for values of $s$ slightly smaller than $s_1$, the support line of $W(S_\theta)$ which passes through $e^{is}$ is close to the line $w_1e^{is}$. A computation involving similar triangles (see Figure~1) yields then
\[
\tau'_I{}_-(s_1)=\frac{|e^{is_2}-w_1| }{|e^{is_1}-w_1|}.
\]
A similar computation, using the fact that the right hand derivative of $\tau_{I} $ in $s_1$ can be estimated by using, for $s$ slightly larger than $s_1$, the line $w_2e^{is}$, yields
\[
\tau_{I}{}'_+(s_1)=\frac{|e^{is_2}-w_2| }{ |e^{is_1}-w_2|}.
\]
Since $\tau_{I} $ is differentiable at $s_1$, we  obtain a contradiction if $w_1\not= w_2$.
\end{proof}

Next we discuss the possible segments contained in $\overline{W(T)}$ which  have an endpoint on $\bbT$. By Proposition~\ref{pr:basic}(3) this endpoint, which we  denote by $\alpha_1$, must belong to $\bbT\cap \sigma(S_\theta)$; moreover, it has actually to be in the boundary (relative to $\bbT$) of this last set.

We consider then an arc $I=\wideparen{\alpha_1\alpha_2}$ that is a connected component of $\bbT\setminus \sigma (S_\theta)$. One case can be settled easily.

\begin{theorem}\label{th:one-to-one on the arc}
 $[\alpha_1,\alpha_2]\subset\partial W(S_\theta)$ if and only if $\widehat\theta$ is one-to-one on  $\wideparen{\alpha_1\alpha_2}$. When these conditions are satisfied, the intersection $[\alpha_1,\alpha_2]\cap W(S_\theta)$ is either empty or contains  one point. The second case happens when $\Wth$ has an angular derivative in the sense of Carath\'eodory at both $\alpha_1$ and $\alpha_2$.
\end{theorem}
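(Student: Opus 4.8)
The plan is to translate every geometric statement about the chord $[\alpha_1,\alpha_2]$ into a statement about the level sets of $\widehat\theta$ on the arc $I=\wideparen{\alpha_1\alpha_2}$, using the dictionary of Theorem~\ref{th:facts theta} (the eigenvalues of $U_\lambda$ lying in $I$ are exactly the roots in $I$ of $\widehat\theta(z)=\lambda$, angular derivatives being automatic at interior points of $I$) together with the dichotomy of Theorem~\ref{th:polygon}. First I would record the reduction. Since $\alpha_1,\alpha_2\in\sigma(S_\theta)\cap\bbT\subset\overline{W(S_\theta)}$ while the open arc $I$ is disjoint from $\overline{W(S_\theta)}$ by Proposition~\ref{pr:basic}(3), convexity forces $[\alpha_1,\alpha_2]\subset\overline{W(S_\theta)}$ and, moreover, $[\alpha_1,\alpha_2]\subset\partial W(S_\theta)$ if and only if the line $d$ through $\alpha_1,\alpha_2$ is a support line for $W(S_\theta)$ (a boundary segment of a planar convex set always lies on a support line, and conversely the contact set of a support line lies in the boundary). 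Thus the first assertion becomes: $d$ is a support line if and only if $\widehat\theta$ is one-to-one on $I$.

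For this equivalence I would feed the component $I$ into Theorem~\ref{th:polygon}. If $d=[\alpha_1,\alpha_2]$ is a support line, then part~(1) says each $U_\lambda$ has at most one eigenvalue in $I$; by Theorem~\ref{th:facts theta} this means $\widehat\theta(z)=\lambda$ has at most one root in $I$ for every $\lambda\in\bbT$, i.e.\ $\widehat\theta$ is one-to-one on $I$. Conversely, if $d$ is not a support line, then $\overline{W(S_\theta)}$, and hence every $\overline{W(U_\lambda)}$, which is the closed convex hull of $\sigma(U_\lambda)$, contains a point strictly on the arc side of $d$; since $\sigma(U_\lambda)\subset\bbT$ and the only part of $\bbT$ on that side is the open arc $I$, convexity forces $\sigma(U_\lambda)\cap I\neq\emptyset$, i.e.\ $\widehat\theta(z)=\lambda$ has a root in $I$, for every $\lambda$. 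Hence $\widehat\theta$ maps $I$ onto $\bbT$, and a continuous surjection of an arc onto the circle cannot be injective (most transparently: $\psi_I$ is monotone and would have to increase by more than $2\pi$). This settles the first assertion.

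For the second assertion, assume $\widehat\theta$ is one-to-one, so $d$ is a support line. By Theorem~\ref{th:intersection result}(1) there is a unitary $1$-dilation $U=U_\lambda$ for which $d$ is a support line for $\overline{W(U)}$ as well, and since $d\cap\overline{W(S_\theta)}=[\alpha_1,\alpha_2]$, Lemma~\ref{le:eigenvalues} applies directly and yields that $[\alpha_1,\alpha_2]\cap W(S_\theta)$ is empty or a single point, the latter occurring precisely when $\alpha_1$ and $\alpha_2$ are both eigenvalues of $U_\lambda$. By Theorem~\ref{th:facts theta} this is exactly the condition that $\widehat\theta$ have an angular derivative in the sense of Carath\'eodory at each of $\alpha_1,\alpha_2$ with $\widehat\theta(\alpha_1)=\widehat\theta(\alpha_2)\;(=\lambda)$. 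Because the set $[\alpha_1,\alpha_2]\cap W(S_\theta)$ does not depend on which dilation is used to test it, one may apply Lemma~\ref{le:eigenvalues} to any $U_\mu$ whose spectrum misses the open arc (equivalently $\mu\notin\widehat\theta(I)^\circ$), and such $U_\mu$ always have $d$ as a support line; this shows in particular that the nonempty case forces $\widehat\theta$ to possess angular derivatives at both endpoints.

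I expect the delicate point to be the coincidence of the boundary values, which is what actually pins down when the second case occurs. Once angular derivatives exist, the nontangential limits $\widehat\theta(\alpha_1),\widehat\theta(\alpha_2)$ exist by Propositions~\ref{pr:limit of the function} and~\ref{pr:limit of the derivative}, and in the one-to-one case $\widehat\theta$ carries $I$ bijectively onto an arc $\gamma\subset\bbT$ whose endpoints are precisely these two limits; they agree exactly when $\gamma$ fills all of $\bbT$. For any $\mu\notin\gamma^\circ$ the dilation $U_\mu$ has no eigenvalue in the open arc and so admits $d$ as a support line, whence Lemma~\ref{le:eigenvalues} forces the intersection to be empty unless the single value $\mu$ can be matched to \emph{both} boundary values at once, i.e.\ unless $\widehat\theta(\alpha_1)=\widehat\theta(\alpha_2)$. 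Organizing this interplay between the existence of the angular derivatives and the coincidence of the boundary values, while always keeping the tested dilation one for which $d$ is a support line so that Lemma~\ref{le:eigenvalues} is legitimately applicable, is the crux of the argument; the convexity reduction of the first paragraph and the bookkeeping with Theorems~\ref{th:polygon} and~\ref{th:facts theta} are comparatively routine.
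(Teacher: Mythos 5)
Your proof is correct and follows essentially the same route as the paper's: the first assertion rests in both cases on the correspondence between roots of $\widehat\theta(z)=\lambda$ on the arc and eigenvalues of $U_\lambda$ (you reach it through a support-line reduction plus Theorem~\ref{th:polygon}, the paper through Lemma~\ref{le:intersection_num_ranges} in one direction and an explicit intersection of triangles $\alpha\alpha_1\alpha_2$ via formula~\eqref{eq:facts theta} in the other, but the substance is the same), and the second assertion is handled identically via Theorem~\ref{th:intersection result}(1) and Lemma~\ref{le:eigenvalues}. One point in your favour: you correctly flag that for $\alpha_1,\alpha_2$ to be eigenvalues of a \emph{common} $U_\lambda$ one needs not only angular derivatives at both endpoints but also $\Wth(\alpha_1)=\Wth(\alpha_2)$; the paper's proof elides this, and its own Example~\ref{ex:interior} (the divisor $\theta'$, which has angular derivatives at $\pm1$ yet $[-1,1]\cap W(S_{\theta'})=\emptyset$) shows the coincidence of boundary values is not automatic, so the final sentence of the theorem is only a necessary condition as stated and your refinement is the accurate version.
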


\begin{proof}
If $\widehat\theta(\beta_1)=\widehat\theta(\beta_2)=\lambda$ for some  $\beta_1\not=\beta_2\in \wideparen{\alpha_1\alpha_2}$, then $\beta_1$ and $\beta_2$ are eigenvalues of $U_\lambda$, and therefore there exists an element of $W(S_\theta)$ on $(\beta_1\beta_2)$ by Lemma~\ref{le:intersection_num_ranges}. Therefore $[\alpha_1,\alpha_2]$ canot be in the boundary of $W(S_\theta)$.

Conversely, suppose $\widehat\theta$ is one-to-one on  $\wideparen{\alpha_1\alpha_2}$.
If $\alpha\in\wideparen{\alpha_1\alpha_2}$, then $\alpha$ is the only solution on that arc of the equation $
\widehat\theta(z)=\widehat\theta(\alpha)$, and thus it is the only eigenvalue therein of the unitary dilation $U_{\widehat\theta(\alpha)}$. The intersection of $\overline{W(U_{\widehat\theta(\alpha)})}$ with the circular segment determined by $\wideparen{\alpha_1\alpha_2}$ is therefore the triangle $\alpha\alpha_1\alpha_2$.  The intersection of all these triangles for $\alpha\in\wideparen{\alpha_1\alpha_2}$ does not contain any point from the (open) circular segment determined by $\alpha_1$ and $\alpha_2$, and therefore Theorem~\ref{th:facts theta} implies that $[\alpha_1,\alpha_2]\subset\partial W(S_\theta)$.

By Theorem~\ref{th:intersection result}(1), there exists a unitary dilation $U_\lambda$ of $S_\theta$ that has $[\alpha_1,\alpha_2]$ as a support line, and Lemma~\ref{le:eigenvalues} implies  that $[\alpha_1,\alpha_2]\cap W(S_\theta)\not=\emptyset$ precisely when $\alpha_1$ and $\alpha_2$ are both eigenvalues of $U_\lambda$. By Theorem~\ref{th:facts theta}, this happens when $\Wth$ has an angular derivative in the sense of Carath\'eodory at $\alpha_1$ and $\alpha_2$.
\end{proof}

We are then left with the case when precisely one of the endpoints of the segment belongs to $\bbT$. A preliminary result discusses corners of $\overline{W(S_\theta)}$ on $\bbT$.

\begin{theorem}\label{th:angle at end point}
If $I=\wideparen{\alpha_1\alpha_2}$ is a connected component of $\bbT\setminus \sigma (S_\theta)$, then the following are equivalent:
\begin{enumerate}
\item There exists a point $\beta\in\wideparen{\alpha_1\alpha_2}$ such that $\widehat\theta$ is one-to-one on  $\wideparen{\alpha_1\beta}$.
\item
There exists a line segment $[\alpha_1,w]$, with $w\in \bbD$, which is a support line for $\partial W(S_\theta)$.
\end{enumerate}
\end{theorem}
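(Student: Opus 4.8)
The plan is to translate both conditions into statements about the strictly increasing angle function $\psi_I(t)=\Arg\widehat\theta(e^{it})$ on $I=\{e^{it}:t_1<t<t_2\}$, and then to read~(2) off the support lines through $e^{it}$ and $e^{i\tau_I(t)}$. For~(1): since $\psi_I'$ is a sum \eqref{eq:derivative of psi for blaschke} (resp. integral \eqref{eq:derivative of psi for singular}) of strictly positive terms, $\widehat\theta=e^{i\psi_I}$ is locally injective, and it is one-to-one on an initial subarc $\wideparen{\alpha_1\beta}=\{e^{it}:t_1<t<t_\beta\}$ precisely when the total increase $\psi_I(t_\beta^-)-\psi_I(t_1^+)$ can be made $<2\pi$; such a $\beta$ exists iff $\lim_{t\searrow t_1}\psi_I(t)>-\infty$. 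By the equivalence of its (1) and (2), Proposition~\ref{pr:limit of the function} shows this holds iff $\widehat\theta$ has a nontangential limit at $\alpha_1$. Thus condition~(1) is equivalent to $\psi_I(t_1^+)>-\infty$.

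Next, the geometry of~(2). If $\widehat\theta$ is one-to-one on all of $I$, then (1) holds and Theorem~\ref{th:one-to-one on the arc} already gives $[\alpha_1,\alpha_2]\subset\partial W(S_\theta)$, whose relative interior lies in $\bbD$, so (2) holds. Otherwise the interval $(t_1,t_1')$ on which $\tau_I$ is defined is nonempty, and for $t\in(t_1,t_1')$ the chord $C_t$ through $e^{it}$ and $e^{i\tau_I(t)}$ is a support line of $W(S_\theta)$ (discussion following \eqref{eq:definition tau}), with $\psi_I(\tau_I(t))=\psi_I(t)+2\pi$. As $t\searrow t_1$ we have $e^{it}\to\alpha_1$, while the relation $\psi_I(\tau_I(t))=\psi_I(t)+2\pi$ shows that $\tau_I(t)$ tends to the point $s^*$ determined by $\psi_I(s^*)=\psi_I(t_1^+)+2\pi$ when $\psi_I(t_1^+)>-\infty$, and that $\tau_I(t)\to t_1$ when $\psi_I(t_1^+)=-\infty$. (Existence of $s^*\le t_2$ in the first case is automatic once a collision occurs, since then $\psi_I$ has variation at least $2\pi$ on $I$.) Hence, assuming (1), the lines $C_t$ converge to the nondegenerate chord $d=[\alpha_1,e^{is^*}]$; as a limit of support lines that still meets $\overline{W(S_\theta)}$ at $\alpha_1$, $d$ is itself a support line, it is not the tangent at $\alpha_1$, and any $w$ in its relative interior lies in $\bbD$. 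This is condition~(2).

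For the converse I would argue the contrapositive. Assume $\psi_I(t_1^+)=-\infty$. By the last assertion of Proposition~\ref{pr:limit of the function}, for every $\lambda\in\bbT$ the equation $\widehat\theta(z)=\lambda$ has infinitely many roots in $I$ converging to $\alpha_1$; that is, each $U_\lambda$ has infinitely many eigenvalues on $I$ accumulating at $\alpha_1$. Suppose, for contradiction, that some support line $d$ of $W(S_\theta)$ passes through $\alpha_1$ and meets $\bbD$ on the side of $I$, so that $d$ cuts off an initial subarc $\{e^{it}:t_1<t<t'\}$ of $I$ lying strictly on the far side of $d$ from $W(S_\theta)$. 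Choosing, via Theorem~\ref{th:facts theta} (or Theorem~\ref{th:intersection result}(1)), a value $\lambda$ for which $d$ supports $W(U_\lambda)$, and recalling that $\overline{W(U_\lambda)}$ is the closed convex hull of $\sigma(U_\lambda)$, every eigenvalue of $U_\lambda$ lies on the near side of $d$. But infinitely many of them lie on the cut-off subarc, strictly on the far side — a contradiction. Hence (2) fails.

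The main obstacle, and the only delicate point, is the correct reading of condition~(2): it asserts the existence of a support line through $\alpha_1$ meeting $\bbD$ on the side of $I$ (a corner of $\overline{W(S_\theta)}$ at $\alpha_1$), not the presence of a genuine segment of $\partial W(S_\theta)$ emanating from $\alpha_1$. Whether the contact set of the limiting line $d$ is in fact nondegenerate is governed by the angular derivative of $\widehat\theta$ at $\alpha_1$ through \eqref{eq:value of psi'(t_1) blaschke}--\eqref{eq:value of psi'(t_1) singular}, a strictly stronger condition that is relevant to the later sections but irrelevant here. The two places demanding care are therefore the convergence $C_t\to d$ and the separation step in the converse — in particular verifying that $d$ is cut off on the side of the arc $I$; everything else reduces to the monotonicity of $\psi_I$ and Proposition~\ref{pr:limit of the function}.
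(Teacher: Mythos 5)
Your proof is correct and takes essentially the same route as the paper: the forward implication is obtained from Proposition~\ref{pr:limit of the function} together with the limit of the support chords $[e^{it},e^{i\tau_I(t)}]$ as $t\searrow t_1$ (reducing to Theorem~\ref{th:one-to-one on the arc} when $\widehat\theta$ is injective on all of $I$), and the converse from the accumulation at $\alpha_1$ of the roots of $\widehat\theta(z)=\lambda$. The only cosmetic difference is in the converse, where the paper observes that every initial subarc of $I$ contains chords that are support lines, while you derive the contradiction from the eigenvalues of a single dilation $U_\lambda$ supported by the hypothetical line; your closing remark that condition~(2) asks for a support line through $\alpha_1$ (not a segment contained in $\partial W(S_\theta)$, which is the subject of Theorem~\ref{th:end points}) correctly identifies the intended reading.
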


\begin{proof}
Assume that (1) is true. It is enough to consider the case when $\widehat\theta$ is not one-to-one on the whole arc $\wideparen{\alpha_1\beta}$, since otherwise we can apply Theorem~\ref{th:one-to-one on the arc}. The assumptions of Proposition~\ref{pr:limit of the function} are then satisfied. The function $\widehat\theta(z)$ has a limit $\eta\in \bbT$ when $z\to \alpha_1$ on the arc $\wideparen{\alpha_1\beta}$, and there exists another point $\beta_1\in \wideparen{\alpha_1\beta}$ such that $\widehat\theta(\beta_1)=\eta$. Moreover, if $\tau_I$ is defined as in~\eqref{eq:definition tau}, then  each of the segments $[e^{it},e^{i\tau_{I} (t)}]$ is a support line for $W(S_\theta)$, and thus the same is true for their limit, which is $[\alpha_1,\beta_1]$. Thus (2) is true.

On the other hand, if (1) is not true, it follows that for any $\lambda\in\bbT$ the equation $\widehat\theta(z)=\lambda$ has an infinite number of solutions in any interval $\wideparen{\alpha_1\beta}$. Then Theorem~\ref{th:facts theta} implies that any interval $\wideparen{\alpha_1\beta}$ contains segments which are support lines for $W(S_\theta)$, so~(2) cannot be true.
\end{proof}

As noted above, if $\alpha_1\in\sigma(S_\theta)\cap \bbT$ and is an isolated singularity, then it is not possible for $\widehat\theta$ to be one to one on a left as well as on a right neighborhood of $\alpha$ (on $\bbT$). Thus, if a point of $\bbT$ is a corner of $\overline{W(T)}$, then one of the support lines is tangent to~$\bbT$.

We can say more about the situation in Theorem~\ref{th:angle at end point}.

\begin{theorem}\label{th:end points}
Suppose  there exists a point $\beta\in\wideparen{\alpha_1\alpha_2}$ such that $\Wth$ is one-to-one on  $\wideparen{\alpha_1\beta}$. The following are equivalent:
\begin{enumerate}
\item
There exists a segment $[\alpha_1,w]\subset \partial W(S_\theta)$ ($w\in \bbD$).
\item
$\widehat\theta$ has an angular derivative in the sense of Carath\'eodory at~$\alpha_1$.
\end{enumerate}
When these equivalent conditions are satisfied, the intersection $[\alpha_1,w]\cap W(S_\theta)$ contains exactly one point.
\end{theorem}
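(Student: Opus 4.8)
The plan is to isolate the single support line distinguished by the hypothesis, and to convert the presence of a genuine boundary segment at $\alpha_1$ into finiteness of the one-sided derivative of $\tau_I$ at $t_1$, which Proposition~\ref{pr:limit of the derivative} ties directly to the angular derivative. Write $\alpha_1=e^{it_1}$, $\alpha_2=e^{it_2}$. Since $\widehat\theta$ is one-to-one on $\wideparen{\alpha_1\beta}$, the strictly increasing function $\psi_I$ cannot have the infinitely many coincidences forced by the failure of conditions (1)--(3) of Proposition~\ref{pr:limit of the function}; hence those conditions hold and $\widehat\theta$ has a nontangential limit $\eta\in\bbT$ at $\alpha_1$. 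I may assume $\widehat\theta$ is not one-to-one on all of $I$, since otherwise Theorem~\ref{th:one-to-one on the arc} applies and the relevant edge has both endpoints on $\bbT$. Then there is a point $\beta_1\in I$ with $\widehat\theta(\beta_1)=\eta$, and, exactly as in the proof of Theorem~\ref{th:angle at end point}, the chord $d=[\alpha_1,\beta_1]$ obtained as the limit of the support lines $d_s=[e^{is},e^{i\tau_I(s)}]$ as $s\searrow t_1$ is a support line for $W(S_\theta)$. Put $\tau_1=\lim_{s\searrow t_1}\tau_I(s)$, the argument of $\beta_1$. By Theorem~\ref{th:polygon} together with Lemma~\ref{le:eigenvalues}, each $d_s$ meets $W(S_\theta)$ in a single point $p_s$, and these points trace the part of $\partial W(S_\theta)$ facing $I$ near $\alpha_1$.

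The key computation has two ingredients. First, differentiating the relation $\psi_I(\tau_I(s))=\psi_I(s)+2\pi$ (valid because $\psi_I$ is strictly increasing and $\tau_I(s)$ is the least argument exceeding $s$ at which $\widehat\theta$ repeats its value) yields $\tau_I'(s)=\psi_I'(s)/\psi_I'(\tau_I(s))$. Letting $s\searrow t_1$ and using that $\tau_1$ is an interior analytic point of $I$, so that $\psi_I'(\tau_1)\in(0,\infty)$, I obtain
\[
\tau_I'{}_+(t_1)=\frac{(\psi_I')_+(t_1)}{\psi_I'(\tau_1)},
\]
which by \eqref{eq:value of psi'(t_1) blaschke} is finite precisely when $(\psi_I')_+(t_1)<\infty$, that is, by Proposition~\ref{pr:limit of the derivative}, precisely when $\widehat\theta$ has an angular derivative at $\alpha_1$. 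Second, the similar-triangles argument from the proof of Theorem~\ref{th:segments in D}, applied here at the endpoint with $e^{is}\to\alpha_1$ and $e^{i\tau_I(s)}\to\beta_1$, shows that the limiting touching point $p=\lim_{s\searrow t_1}p_s$ lies on $d$ and satisfies $\tau_I'{}_+(t_1)=|\beta_1-p|/|\alpha_1-p|$.

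Combining the two identities settles the equivalence: $p\ne\alpha_1$ holds if and only if $\tau_I'{}_+(t_1)<\infty$, i.e.\ if and only if $\widehat\theta$ has an angular derivative at $\alpha_1$. If $p\ne\alpha_1$, then since $p,p_s\in\partial W(S_\theta)$ and $\alpha_1,p\in d$, convexity gives $[\alpha_1,p]\subset d\cap\overline{W(S_\theta)}\subset\partial W(S_\theta)$, a nondegenerate segment with interior endpoint in $\bbD$, so (1) holds; if $p=\alpha_1$, the touching points collapse to $\alpha_1$ and no flat piece can occur, so $d\cap\overline{W(S_\theta)}=\{\alpha_1\}$ and (1) fails, any segment $[\alpha_1,w]$ with $w\in\bbD$ necessarily lying on $d$ (the unique support line at $\alpha_1$ entering $\bbD$, by the remark following Theorem~\ref{th:angle at end point}). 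Finally, when these conditions hold, $\alpha_1$ and $\beta_1$ are both eigenvalues of $U_\eta$ (for $\beta_1$ this is automatic by Theorem~\ref{th:facts theta}, it being an analytic point with $\widehat\theta(\beta_1)=\eta$), and $d$ supports both $W(S_\theta)$ and $W(U_\eta)$; Lemma~\ref{le:eigenvalues} then gives that $d\cap W(S_\theta)$, and hence $[\alpha_1,w]\cap W(S_\theta)=d\cap W(S_\theta)$, is a single point. The main obstacle is the envelope step: making rigorous, as a one-sided limit at the endpoint $t_1$, both the convergence of the touching points $p_s$ and the relation $\tau_I'{}_+(t_1)=|\beta_1-p|/|\alpha_1-p|$, in particular verifying that the $p_s$ approach the near endpoint $w$ of the segment and not $\alpha_1$.
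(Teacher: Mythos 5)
Your argument follows the paper's proof in all essentials: you reduce condition (2) to finiteness of the one-sided derivative of $\tau_I$ at $t_1$ via Proposition~\ref{pr:limit of the derivative}, use the similar-triangles/envelope computation along the support lines $[e^{is},e^{i\tau_I(s)}]$ to show this is equivalent to the touching points not collapsing to $\alpha_1$ (hence to the existence of the segment), and obtain the final uniqueness claim from Lemma~\ref{le:eigenvalues} applied to $U_\eta$. The only difference is cosmetic: you make explicit, via the identity $\psi_I(\tau_I(s))=\psi_I(s)+2\pi$ and the resulting formula $\tau_I'(s)=\psi_I'(s)/\psi_I'(\tau_I(s))$, a step the paper dispatches with ``it is easy to see,'' and you candidly flag the same limiting/envelope step that the paper also treats informally.
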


\begin{proof}
Condition (2) above means that $\Wth$ satisfies the assumptions of Proposition~\ref{pr:limit of the derivative}. Using the notations therein (in particular, $\alpha_1=e^{it_1}$), it is easy to see that  the required differentiability of $\psi_I$ at $t_1$ is equivalent to the differentiability of $\tau_I$, or to the boundedness of the derivative of $\tau_I$ on a right neighborhood of~$t_1$.

To show that $\partial W(S_\theta)$ contains a segment with an end at $\alpha$, one has, by Theorem~\ref{th:facts theta}, to prove that  the intersection of all half-planes determined by $[\zeta,\tau_{I} (\zeta)]$, with $\zeta\in  \wideparen{\alpha_1\beta}$, and containing $W(S_\theta)$ contains such a segment; a moment's reflection shows that it is actually enough to consider $\zeta$ in a neighborhood of $\alpha_1$.
\begin{figure}[h]
\begin{center}
\def\svgwidth{3in}
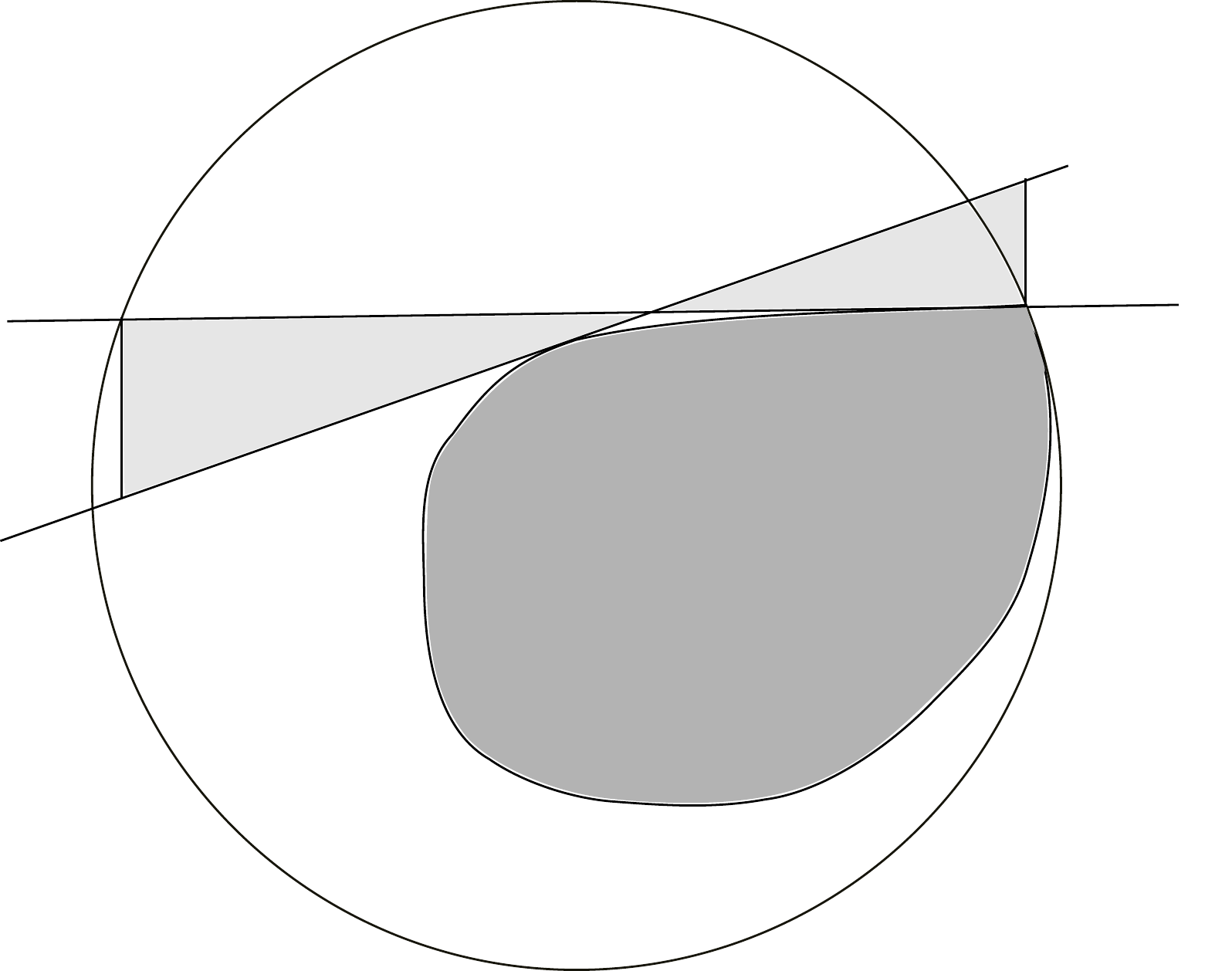
\caption{}
\end{center}
\end{figure}

The argument now is close to the one in Theorem~\ref{th:segments in D}. Remembering that $\beta_1\in\wideparen{\alpha_1\beta}$ satisfies $\Wth(\beta_1)=\Wth(\alpha_1)$,   take $\zeta=e^{it}\in  \wideparen{\alpha_1\beta}$, and denote by $w_t$ the intersection between the segments $[\alpha_1,\beta_1]$ and $[e^{it},e^{i\tau_{I} (t)}]$. Again by considering similar triangles (see Figure~2) we obtain that, when $\zeta$ is close to $\alpha_1$, we have
\[
\frac{|\beta_1- w_t|}{|\alpha_1- w_t|} \approx
\frac{|\beta_1-e^{i\tau_{I} (t)}|}{|\alpha_1-e^{it}|}.
\]
Condition (1) is equivalent to the fact that $|\alpha_1 w_t|$ is bounded below in a neighborhood of $t_1$. The above estimate shows that this happens if and only if the quotient
 $|\beta_1e^{i\tau_{I} (t)}|/|\alpha_1e^{it}|$ is bounded. The last assertion is obviously equivalent to the boundedness of the derivative  of $\tau_{I} $ or, as noted above, to condition (2).
 
Assume now that the equivalent assertions of the theorem are satisfied.  If $\eta=\Wth(\alpha_1)$, then $\Wth(\beta_1)=\eta$, and it follows from Theorem~\ref{th:facts theta} that $\alpha_1$ and $\beta_1$ are eigenvalues of $U_\eta$. Applying then Lemma~\ref{le:eigenvalues} shows that $[\alpha_1,\beta_1]\cap W(S_\theta)$ contains exactly one point.
\end{proof}

With some supplementary effort one can show that, when it exists, the point in $[\alpha_1,w]\cap W(S_\theta)$ is precisely~$w$, the endpoint of the maximal segment contained in~$\partial W(T)$.

\section{Boundary points when $\theta$ is inner}

We already know from Theorem~\ref{th:exposed and corners} that the exposed points of $\overline{W(S_\theta)}$ which are in $\bbD$ actually  belong to $W(S_\theta)$. More  can be said about boundary points of $W(S_\theta)$ when $\theta$ is inner. In this case one can describe all invariant subspaces of $S_\theta$ as having the form $\theta_1 \bm{H}_{\theta/\theta_1}$ for some inner divisor $\theta_1$ of $\theta$~\cite[Proposition X.2.5]{SNF}. Remember also that, by Theorem~\ref{th:exposed and corners}(1), an exposed point of $\overline{W(S_\theta)}$ belongs actually to $W(S_\theta)$.

\begin{theorem}\label{th:exposed1}
Assume $\theta$ is an inner function,  $w\in W(S_\theta)\cap\partial W(S_\theta)$, and $M\subset \Hth$ is a proper invariant subspace for $S_\theta$. Then $w\not\in W(S_\theta|M)$;
if, moreover, $w$ is an exposed point of $\overline{W(S_\theta)}$, then $w\not\in \overline{W(S_\theta|M)}$.
The set $\{x:\<S_\theta x,x\>=w\|x\|^2\}$ is a subspace of dimension one; if  $\xi_w$ generates it, then $\xi_w$ is cyclic for both $S_\theta$ and $S_\theta^*$.
\end{theorem}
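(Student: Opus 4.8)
The plan is to prove the dimension-one assertion first, then extract both cyclicity statements from an explicit formula for the generating vector, and finally deduce the two numerical-range statements.

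First I would record that $w\in\bbD$ (since $S_\theta$ is completely nonunitary, by Proposition~\ref{pr:basic}(1)) and fix a support line $d$ of $\overline{W(S_\theta)}$ through $w$. Because $w\in\bbD$ the line $d$ cannot be tangent to $\bbT$, so $d\cap\bbT=\{\alpha_1,\alpha_2\}$ with $\alpha_1\neq\alpha_2$; by Theorem~\ref{th:intersection result}(1) there is a unitary $1$-dilation $U$ for which $d$ supports $\overline{W(U)}$. Since $w\in d\cap W(S_\theta)\neq\emptyset$, Lemma~\ref{le:eigenvalues} applies and shows at once that $\{x:\<S_\theta x,x\>=w\|x\|^2\}$ is one-dimensional, say $\bbC\xi_w$ with $\|\xi_w\|=1$; moreover $\alpha_1,\alpha_2$ are eigenvalues of $U$, so $\Wth(\alpha_1)=\Wth(\alpha_2)=:\lambda$ by Theorem~\ref{th:facts theta}. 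The key logical reduction is that, once the solution set equals $\bbC\xi_w$, the assertion ``$w\notin W(S_\theta|M)$ for every proper invariant $M$'' is \emph{equivalent} to $\xi_w$ being cyclic for $S_\theta$: any unit $x\in M$ with $\<S_\theta x,x\>=w$ lies in $\bbC\xi_w$, forcing $\xi_w\in M$, while conversely $\bigvee_n S_\theta^n\xi_w$ is an invariant subspace containing $\xi_w$ and realizing $w$. Thus everything reduces to showing $\xi_w$ is cyclic for $S_\theta$ and for $S_\theta^*$.

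The heart of the argument is an explicit description of $\xi_w$. Let $e^{i\phi}$ be the outward normal of $d$ and $m=\operatorname{Re}(e^{-i\phi}w)$, so that $\xi_w$ maximizes $x\mapsto\<A_\phi x,x\>$ on the unit sphere, where $A_\phi=\tfrac12(e^{-i\phi}S_\theta+e^{i\phi}S_\theta^*)$ is self-adjoint; hence $A_\phi\xi_w=m\xi_w$. I would test this identity against the reproducing kernels $k_\beta$ of $\Hth$, using $(S_\theta^*\xi_w)(\beta)=(\xi_w(\beta)-\xi_w(0))/\beta$ and the fact that $\xi_w\perp\theta H^2$ (a Riesz-projection computation) forces $S_\theta\xi_w=z\xi_w-c\theta$ for a scalar $c$. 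This yields the functional equation $q(\beta)\xi_w(\beta)=e^{-i\phi}c\,\Wth(\beta)+e^{i\phi}\xi_w(0)$ with $q(\beta)=e^{-i\phi}\beta^2-2m\beta+e^{i\phi}$. On $\bbT$ the equation $q(z)=0$ is exactly the equation $\operatorname{Re}(e^{-i\phi}z)=m$ of $d$, so the two (simple) roots of $q$ are precisely $\alpha_1,\alpha_2\in\bbT$; since $\xi_w\in H^2$ cannot have a pole on $\bbT$, the right-hand side must vanish at $\alpha_1,\alpha_2$, and as $\Wth(\alpha_1)=\Wth(\alpha_2)=\lambda$ this forces
\[
\xi_w(z)=\kappa\,\frac{\Wth(z)-\lambda}{(z-\alpha_1)(z-\alpha_2)}
\]
for a nonzero constant $\kappa$.

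From this formula the cyclicity is immediate. The denominator is outer, and the numerator $\Wth(z)-\lambda=z\theta(z)-\lambda$ has no zero in $\bbD$ (as $|z\theta(z)|<1=|\lambda|$ there) and shares no inner factor with $\theta$: a common nonconstant inner divisor $s$ would divide $z\theta-(z\theta-\lambda)=\lambda$, which is impossible. Hence $\xi_w$ is coprime to $\theta$, so it lies in no proper invariant subspace $\theta_1\bm{H}_{\theta_2}$ (where $\theta=\theta_1\theta_2$) and is cyclic for $S_\theta$; this gives the first statement (and re-confirms the dimension-one claim). For $S_\theta^*$ I would use the conjugation $C$ on $\Hth$ with $CS_\theta C=S_\theta^*$: a direct computation gives $C\xi_w(z)=\kappa'\,(1-\bar\lambda\,\Wth(z))/\big((z-\alpha_1)(z-\alpha_2)\big)$, whose numerator is again coprime to $\theta$ (a common factor would divide $1$), so $C\xi_w$ is $S_\theta$-cyclic, equivalently $\xi_w$ is $S_\theta^*$-cyclic. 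Finally, for the exposed case one argues formally: if $w$ is exposed and $M=\theta_1\bm{H}_{\theta_2}$ is proper, the exposing line meets $\overline{W(S_\theta)}\supseteq\overline{W(S_\theta|M)}$ only at $w$, so $w$ is an exposed point of $\overline{W(S_\theta|M)}=\overline{W(S_{\theta_2})}$ lying in $\bbD$; Theorem~\ref{th:exposed and corners}(1) applied to $S_{\theta_2}$ then puts $w\in W(S_\theta|M)$, contradicting the first statement (the case $\dim\bm{H}_{\theta_2}=1$ is excluded because $w$ would be an eigenvalue of $S_\theta$, hence interior by Theorem~\ref{th:exposed and corners}(2)). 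I expect the main obstacle to be the middle step: justifying the precise action $S_\theta\xi_w=z\xi_w-c\theta$ and, above all, the passage from the functional equation to the closed formula, i.e.\ arguing that $\xi_w\in H^2$ genuinely forces cancellation of the boundary poles at $\alpha_1,\alpha_2$ --- this is exactly where the angular-derivative content behind Theorem~\ref{th:facts theta} enters.
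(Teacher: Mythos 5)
Your argument is sound in outline but follows a genuinely different route from the paper's. The paper proves the exclusion $w\notin W(S_\theta|M)$ \emph{first}, by a winding-number argument: Lemma~\ref{le:eigenvalues} and Theorem~\ref{th:facts theta} show that the support line through $w$ meets $\bbT$ in two consecutive solutions $\alpha,\alpha'$ of $\Wth(z)=\lambda$, so $\arg\Wth$ increases by exactly $2\pi$ from $\alpha$ to $\alpha'$; if $w$ also lay in $W(S_{\theta_2})$, the same would hold for $\arg(z\theta_2)$, forcing $\arg\theta_1$ to be constant and $\theta_1$ to be a unimodular constant. Cyclicity of $\xi_w$ is then \emph{deduced} from the exclusion by applying it to the invariant subspace generated by $\xi_w$, and the adjoint case is handled by passing to $S_{\widetilde\theta}\cong S_\theta^*$ and $\bar w$. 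You reverse the logical order: you prove cyclicity first, via the eigenvector equation $A_\phi\xi_w=m\xi_w$ for the real part of $e^{-i\phi}S_\theta$ and the resulting closed formula $\xi_w=\kappa(\Wth-\lambda)/\bigl((z-\alpha_1)(z-\alpha_2)\bigr)$, and then use the (correct) observation that, once the solution set is one-dimensional, the exclusion statement is equivalent to cyclicity of $\xi_w$. Your treatment of the exposed case coincides with the paper's (your handling of $\dim\bm{H}_{\theta_2}=1$ is a detail the paper glosses over). What your approach buys is an explicit formula for the extremal vector, which makes the coprimality with $\theta$, hence cyclicity for both $S_\theta$ and $S_\theta^*$, completely transparent; what it costs is the boundary analysis needed to justify that formula.

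That analysis --- the step you flag as the main obstacle --- is genuinely needed, but it closes with a standard estimate. Write $q(z)\xi_w(z)=e^{-i\phi}c\,\Wth(z)+e^{i\phi}\xi_w(0)=:N(z)$ with $q(z)=e^{-i\phi}(z-\alpha_1)(z-\alpha_2)$. Since $\xi_w\in H^2$, one has $|\xi_w(r\alpha_j)|=o\bigl((1-r)^{-1/2}\bigr)$ as $r\nearrow1$, while $|q(r\alpha_j)|=O(1-r)$, so $N(r\alpha_j)\to0$ radially. On the other hand, Lemma~\ref{le:eigenvalues} together with Theorem~\ref{th:facts theta} gives the angular derivative of $\Wth$ at $\alpha_1$ and $\alpha_2$ with the common value $\lambda$, so the bounded function $N$ has nontangential limit $e^{-i\phi}c\lambda+e^{i\phi}\xi_w(0)$ at both points; hence that limit is $0$, $c\neq0$ (otherwise $\xi_w\equiv0$), and $N=e^{-i\phi}c(\Wth-\lambda)$, which is your formula. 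Two small points worth recording explicitly: $(z-\alpha_1)(z-\alpha_2)$ is outer, so the inner part of $\xi_w$ is that of $\Wth-\lambda$, and the passage from coprimality to cyclicity uses the description of the invariant subspaces of $S_\theta$ as $\theta_1\bm{H}_{\theta/\theta_1}$, which the paper quotes from \cite[Proposition X.2.5]{SNF}. With these additions your proof is complete.
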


\begin{proof}
Let  $d$ be a support line for $w$ which intersects $\bbT$ in $\alpha, \alpha'$. From Lemma~\ref{le:eigenvalues} and Theorem~\ref{th:facts theta} it follows that $\theta$ has an angular derivative in the sense of Cara\-th\'eo\-dory at $\alpha$ and $\alpha'$, and $\alpha, \alpha'$ are consecutive solutions of the equation $\widehat\theta(z)=\lambda$ for some $\lambda\in\bbT$. Thus, if we move from $\alpha$ to $\alpha'$ counterclockwise, the argument of $\widehat\theta(z)$ increases by $2\pi$.

If $M$ is an invariant subspace of $S_\theta$,  there exists a decomposition $\theta=\theta_1\theta_2$, with $\theta_i$ inner, such that $M=\theta_1 \bm{H}_{\theta_2}$; $S_\theta|M$ is then unitarily equivalent to $S_{\theta_2}$.
If $w\in W(S_\theta|M)$, then $d$ is also a support line for $w$ with respect to this last set.  A similar reasoning tells us that the argument of $z\theta_2(z)$ has to increase by $2\pi$ if we move from $\alpha$ to $\alpha'$ counterclockwise. But, since $\widehat\theta=z\theta_2\theta_1$, it would follow that the argument of $\theta_1$ stays constant, and thus $\theta_1$ is a constant. This contradicts the assumption that $M$ is a proper invariant subspace; thus $w\not\in W(S_\theta|M)$.

If $w$ is an exposed point of $\overline{W(S_\theta)}$ and belongs to  $\overline{W(S_\theta|M)}$, then it is also an exposed point of $\overline{W(S_\theta|M)}$. By Theorem~\ref{th:exposed and corners}(1), it must be actually in  $W(S_\theta|M)$, and the preceding argument applies. 

The subspace $Y=\{x\in H^2\ominus \theta_2 H^2:\<S_\theta x,x\>=w\|x\|^2\}$ has dimension~1 by Lemma~\ref{le:eigenvalues}. 
If $\xi_w$ generates $Y$, and $M$ is the invariant subspace of $S_\theta$ generated by $\xi_w$, then~(1) implies that $M=\Hth$. The same argument can be applied to $\bar w$, which is an exposed point in $W(S_\theta^*)$, if we note that $S_\theta^*$ is unitarily equivalent to $S_{\widetilde\theta}$, where $\widetilde{\theta}(z)=\overline{\theta(\bar z)}$.
\end{proof}

It follows from Theorem~\ref{th:exposed1}(1) that, if $M\subset \Hth$ is a proper invariant subspace for $S_\theta$, then $W(S_\theta|M)$ is contained in the interior of $W(S_\theta)$. For $\theta$  a finite Blaschke product, this is proved in~\cite{Ber}. Note that the assertion is not necessarily true for $\overline{W(S_\theta|M)}$, as shown by Example~\ref{ex:interior}. 

We end this section with the analogue of a finite dimensional result proved in~\cite{GauWu3, Ber}. Recall that a completely nonunitary contraction is said to be of class $C_0$ if $u(T)=0$ for some nonzero function $u\in H^\infty$.

\begin{theorem}\label{th:exposed direct summand} Assume that $\theta\in H^\infty$ is an inner function and
 $T\in\LL(\HH)$ is a contraction of class $C_0$, such that $\theta(T)=0$. Then we have $\overline{W(T)}\subset\overline{W(S_\theta)}$.  Moreover, if $W(T)$ contains a point in $W(S_\theta)\cap \partial W(S(\theta))$, then  $T$ has an orthogonal summand which is unitarily equivalent to $S_\theta$.
 \end{theorem}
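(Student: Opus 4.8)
The plan is to prove both assertions at once by realizing $T$ concretely as a piece of an ampliation of $S_\theta$. Since $T$ is of class $C_0$ with $\theta(T)=0$, its characteristic function $\Theta_T$ is a two-sided inner function on a space of dimension $\kappa:=\dim\DD_T=\dim\DD_{T^*}$, and the hypothesis $\theta(T)=0$ means, in the language of \cite{SNF}, that $\Theta_T$ is an inner divisor of $\theta I_{\bbC^\kappa}$. First I would invoke the standard correspondence between regular factorizations and invariant subspaces: the factorization of $\theta I_{\bbC^\kappa}$ having $\Theta_T$ as a factor produces a subspace $M$ of $\Hth\otimes\bbC^\kappa=H^2(\bbC^\kappa)\ominus\theta H^2(\bbC^\kappa)$ that is invariant for $S_\theta^{(\kappa)}:=S_\theta\otimes I_{\bbC^\kappa}$ and such that $S_\theta^{(\kappa)}|M$ is unitarily equivalent to $T$. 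From now on I identify $T$ with $S_\theta^{(\kappa)}|M$.

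With this realization the first assertion is immediate: restriction to an invariant subspace does not enlarge the numerical range, so $W(T)=W(S_\theta^{(\kappa)}|M)\subset W(S_\theta^{(\kappa)})$, while ampliation leaves the numerical range unchanged, $\overline{W(S_\theta^{(\kappa)})}=\overline{W(S_\theta)}$; hence $\overline{W(T)}\subset\overline{W(S_\theta)}$. For the second assertion, suppose $w\in W(T)$ with $w\in W(S_\theta)\cap\partial W(S_\theta)$, and note $w\in\bbD$ by Proposition~\ref{pr:basic}(1). I would pick a unit vector $x_0\in M$ with $\langle S_\theta^{(\kappa)}x_0,x_0\rangle=w$ and a support line $d$ of $\overline{W(S_\theta)}$ at $w$. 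Since $w\in\bbD$, the line $d$ meets $\bbT$ in two distinct points, so by Theorem~\ref{th:intersection result}(1) and Lemma~\ref{le:eigenvalues} the intersection $d\cap W(S_\theta)$ is the single point $w$, while $\{x:\langle S_\theta x,x\rangle=w\|x\|^2\}$ is the one–dimensional space $\bbC\xi_w$. Writing $d$ as a level line of $z\mapsto\operatorname{Re}(\bar\zeta z)$ with $\zeta\in\bbT$ and expanding $x_0=\sum_i\eta_i\otimes e_i$, the identity $\operatorname{Re}(\bar\zeta w)=\sum_i\operatorname{Re}(\bar\zeta\langle S_\theta\eta_i,\eta_i\rangle)$, combined with the maximality of $\operatorname{Re}(\bar\zeta w)$ over $\overline{W(S_\theta)}$ and $\sum_i\|\eta_i\|^2=1$, forces each nonzero $\eta_i$ to satisfy $\langle S_\theta\eta_i,\eta_i\rangle=w\|\eta_i\|^2$, whence $\eta_i\in\bbC\xi_w$ for all $i$. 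Thus $x_0$ degenerates to a pure tensor $x_0=\xi_w\otimes b$ for some unit vector $b\in\bbC^\kappa$.

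It then remains to extract the reducing copy of $S_\theta$, and here cyclicity finishes the job. Because $\xi_w$ is cyclic for $S_\theta$ by Theorem~\ref{th:exposed1}, and $M$ is invariant for $S_\theta^{(\kappa)}$ and contains $x_0=\xi_w\otimes b$,
\[
\Hth\otimes\bbC b=\Bigl(\bigvee_{k\ge0}S_\theta^k\xi_w\Bigr)\otimes\bbC b=\bigvee_{k\ge0}(S_\theta^{(\kappa)})^k x_0\subset M .
\]
The subspace $L:=\Hth\otimes\bbC b$ reduces $S_\theta^{(\kappa)}$ and lies inside $M$, so it also reduces $T=S_\theta^{(\kappa)}|M$: invariance under $T$ is clear, and for $y\in L$ one has $(S_\theta^{(\kappa)})^*y\in L\subset M$, hence $T^*y=P_M(S_\theta^{(\kappa)})^*y=(S_\theta^{(\kappa)})^*y\in L$. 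Since $T|L=S_\theta^{(\kappa)}|L\cong S_\theta$, the space $L$ is an orthogonal summand of $T$ unitarily equivalent to $S_\theta$, which is the desired conclusion.

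The conceptual heart of the argument is the rigidity in the second paragraph—the maximizing vector is forced to be a pure tensor by the one–dimensionality in Lemma~\ref{le:eigenvalues}—after which the cyclicity of $\xi_w$ does everything. I expect the genuinely delicate point to be the very first step: verifying that $\theta(T)=0$ makes $\Theta_T$ an inner divisor of $\theta I_{\bbC^\kappa}$ and hence that $T$ embeds as $S_\theta^{(\kappa)}|M$. This is exactly where the functional model and the divisor theory of \cite{SNF} are indispensable, and it is worth isolating so that the numerical–range computation can proceed purely geometrically inside the ampliation.
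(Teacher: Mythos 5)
Your proof is correct and follows essentially the same route as the paper's: realize $T$ as the restriction of an ampliation $S_\theta\otimes 1_{\KK}$ to an invariant subspace, use boundary rigidity (the support line meets $W(S_\theta)$ in the single point $w$, and the maximizing vectors form a one-dimensional space) to force the representing vector to be a pure tensor $\xi_w\otimes b$, and then use the cyclicity of $\xi_w$ from Theorem~\ref{th:exposed1} to extract the reducing copy of $\Hth$. Your explicit appeal to Theorem~\ref{th:intersection result}(1) and Lemma~\ref{le:eigenvalues} where the paper simply calls $w$ an exposed point is, if anything, a slightly more careful rendering of the same step.
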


\begin{proof}
It is well-known that if $\theta$ is the minimal function of $T$, then $T$ is unitarily equivalent to the restriction of $S_\theta\otimes 1_\KK\in\LL(\Hth\otimes\KK)$ (for some Hilbert space $\KK$) to an invariant subspace~$M$. We can assume that $\HH=M$ and $T$ is equal to this restriction. The inclusion $\overline{W(T)}\subset\overline{W(S_\theta)}$ is obvious.

Let $w\in W(S_\theta)\cap \partial W(S(\theta))$ be the given  point and take $h\in\HH$ with $\<Th,h\>=w$. We have $h=\sum_{i\in I} a_i( h_i\otimes e_i)$ for some unit vectors $h_i\in \Hth$, an orthonormal sequence $(k_i)_{i\in I}\subset\KK$, and $a_i> 0$ with $\sum a_i^2=1$. Since
\[
w=\<Th,h\>=\sum_{i\in I}a_i^2 \<S_\theta h_i, h_i\>
\]
is an exposed point, it follows that we must have $\<S_\theta h_i, h_i\>=w$ for all $i\in I$.

According to Theorem~\ref{th:exposed1}(2), we must have  $h_i=\gamma_i\xi_w$ for some $\gamma_i$ with $|\gamma_i|=1$. Then
\[
h=\sum_i a_i\gamma_i (\xi_w\otimes e_i)=\xi_w\otimes e,
\]
where $e=\sum_i a_i\gamma_i e_i\in \KK$ is a unit vector. Again by Theorem~\ref{th:exposed1}(2), $\xi_w$ is cyclic for $S_\theta$ and $S_\theta^*$, so $\Hth\otimes e\subset \HH$ is a reducing subspace of  $S_\theta\otimes 1_\KK$ contained in~$\HH$ which provides the required orthogonal summand.
\end{proof}

\section{Examples}\label{se:examples}

We  consider some examples of functions $\theta$ in the unit ball of $H^\infty$ and  discuss the numerical ranges of the corresponding model operators $S_\theta$. In particular, they show that all  cases described in Theorems~\ref{th:one-to-one on the arc},~\ref{th:angle at end point}, and~\ref{th:end points}  actually occur. Also,  it was proved for finite Blaschke products in~\cite{GauWu2} and for inner functions with one singularity on the unit circle in~~\cite{CGP2} that $\overline{W(S_\theta)}$  determines the function~$\theta$.
Examples~\ref{ex:conformal} and~\ref{ex:interior} show that this is not true in general.

\begin{example}\label{ex:conformal}
Let $\theta$ be a conformal mapping of the disc $\bbD$ onto the region $G$ bounded by an arc $\wideparen{z_1z_2}$ on $\bbT$ and a smooth curve that joins $z_1$ and $z_2$ and is contained in $\bbD$. If $\theta(\lambda_i)=z_i$, then $\sigma(T)\cap\bbT$ is the closure of the arc $\wideparen{\lambda_1\lambda_2}$, where $\theta(\wideparen{\lambda_1\lambda_2})=\wideparen{z_1z_2}$, while it follows from Theorem~\ref{th:one-to-one on the arc} that $\overline{W(T)}$ is precisely the closed convex hull of this set; that is, the circular segment determined by $\wideparen{\lambda_1\lambda_2}$.
Note that this numerical range is depends only on $\lambda_1$ and $\lambda_2$, and  not  on the precise shape of~$G$.
\end{example}

The example above has the advantage of simplicity, but one can obtain interesting examples of numerical ranges by looking only at inner functions $\theta$. We may thus discuss the different cases possible cases for a point in the boundary of $\overline{W(T)}\cap\bbT$.

\begin{example}\label{ex:infinite solutions}
Let $\theta=BS$ be the decomposition into Blaschke product $B$ and singular inner part with measure $\mu$, and let $\zeta\in\bbT$ be an endpoint of  a connected component of $\bbT\setminus \sigma(S_\theta)$. It is shown in~\cite[Lemma 3.3]{CGP3} that if there exists a sequence of zeros of $B$ that converge to $\zeta\in\bbT$ in a Stolz angle, or if $\mu(\{\zeta\})>0$, then $\widehat\theta$ is not one-to-one on any arc $\wideparen{\zeta\zeta'}$. The assumptions of Proposition~\ref{pr:limit of the function} are not satisfied, and $\partial W(T)$ is smooth at $\zeta$ by Theorem~\ref{th:angle at end point}.
\end{example}

\begin{example}\label{ex:corners with segment}
To obtain examples of corners of $\partial W(T)$, we use the above quoted fact that if $\Wth$ is written under the form~\eqref{eq:decomposition of function}, then~\eqref{eq:angular derivative} is necessary and sufficient for the existence of the angular derivative.
Choose then a function $\theta$ such that $\zeta$ is an endpoint of a component of $\bbT\setminus\sigma(S_\theta)$ and~\eqref{eq:angular derivative} is satisfied. According to Theorem~\ref{th:end points}, $\partial W(T)$ contains then a segment with one endpoint at~$\zeta$.
\end{example}

\begin{example}\label{ex:corners no segment}
Another result of Ahern and Clark~\cite{AC1} says that if an inner function is written as in~\eqref{eq:decomposition of function} with $\nu$ singular, $\nu(\{\zeta\})=0$, and
\begin{equation}\label{eq:radial limit}
\sum_n\frac{1-|a_n|}{|\zeta-a_n|} + \int_{[0,2\pi)} \frac{d\nu(t)}{|e^{it}-\zeta|} <\infty,
\end{equation}
then $\theta$ has a nontangential limit at $\zeta$. Choose the zeros of the Blaschke product and the singular measure such that $\theta$ is an endpoint of a connected component of $\bbT\setminus\sigma(S_\theta)$,~\eqref{eq:radial limit} is satisfied, but~\eqref{eq:angular derivative} is not. Then, according to Theorem~\ref{th:end points}, $\zeta$ is a corner of $\partial W(T)$, but $\partial W(T)$ does not contain a line segment ending at $\zeta$.
 \end{example}

\begin{example}\label{ex:interior}
The last example is related to Theorem~\ref{th:exposed1}. Consider a Blaschke product $\theta$ whose zeros $a_n$ accumulate on $\{z\in\bbT: \Im z\le 0\}$. This can be done such that the left hand side of~\eqref{eq:angular derivative} is as small as we like, both for $\zeta=1$ and for $\zeta=-1$. If this quantity is sufficiently small, $\theta$ is one-to-one on $\{z\in\bbT: \Im z> 0\}$, and it follows then from Theorem~\ref{th:one-to-one on the arc} that $\overline{W(S_\theta)}=\{z\in\bbC:|z|\le 1, \Im z\le 0\}$.

 We can then take a proper subsequence of  $a_n$ with the same properties. If $\theta'$ is the corresponding Blaschke product, then $M=\frac{\theta}{\theta'} \bm{H}_{\theta'}$ is a proper invariant subspace of $S_\theta$, and its restriction to this subspace is unitarily equivalent to $S_{\theta'}$. We know by Theorem~\ref{th:exposed1} that $W(S_{\theta'})$ is contained in the interior of $W(S_{\theta})$.  However, $\overline{W(S_\theta|M})=\overline{W(S_{\theta'})}= \overline{W(S_{\theta})}=\{z\in\bbC:|z|\le 1, \Im z\le 0\}$. 
 
We can be more careful with the choice of the zeros of $\theta$, such that $\Wth$ has angular derivatives in the sense of Carath\'eodory at 1 and $-1$, and  $\Wth(1)=\Wth(-1)$. Then the argument of $\Wth$ increases by $2\pi$ on the upper semicircle; also, this is not possible for the argument of $\widehat\theta'$, which is a proper divisor of $\Wth$. It follows from Lemma~\ref{le:eigenvalues} and Theorem~\ref{th:facts theta} that $W(S_\theta)$ contains precisely one point $w\in [-1,1]$, while $W(S_{\theta'})\cap\bbR=\emptyset$. Thus
\[
W(S_{\theta'})=\{z\in\bbC:|z|< 1, \Im z< 0\}\subsetneq \{z\in\bbC:|z|< 1, \Im z< 0\}\cup\{w\}= W(S_\theta).
\]
 \end{example}

\end{document}